\DeclareMathOperator{\Hom}{Hom}
\newcommand{\angles}[1]{\left\langle #1 \right\rangle}
\theoremstyle{definition}
\newtheorem{mydef}{\textbf{Definition}}[section]
\newtheorem{myeg}[mydef]{\textbf{Example}}
\newtheorem{rmk}[mydef]{\textbf{Remark}}
\theoremstyle{plain}
\newtheorem{mythm}[mydef]{\textbf{Theorem}}
\newtheorem*{nothma}{\textbf{Theorem A}}
\newtheorem*{nothmb}{\textbf{Theorem B}}
\newtheorem*{nothmc}{\textbf{Theorem C}}
\newtheorem{lem}[mydef]{\textbf{Lemma}}
\newtheorem{pro}[mydef]{\textbf{Proposition}}
\newtheorem{cor}[mydef]{\textbf{Corollary}}
\newcommand{\T}{\mathbb{T}}
\newcommand{\multipleaffil}[3]{%
  \address{%
    \begin{minipage}[t]{\textwidth}
      #1 \\
      #2 \\
      #3 
    \vspace{0.1cm}
    \end{minipage}
  }
}
\begin{document}

\title{Matroidal representations of low rank}
\author{Jaiung Jun}
\multipleaffil{Department of Mathematics, State University of New York at New Paltz, NY, USA}{and}{Institute for Advanced Study, Princeton, NJ, USA}
\email{junj@newpaltz.edu, junj@ias.edu}

\author{Kalina Mincheva}
\address{Department of Mathematics, Tulane University, New Orleans, LA 70118, USA}
\email{kmincheva@tulane.edu}

\author{Jeffrey Tolliver}
\address{}
\email{jeff.tolli@gmail.com}
\makeatletter
\@namedef{subjclassname@2020}{%
	\textup{2020} Mathematics Subject Classification}
\makeatother

\subjclass[2020]{12K10, 14T10, 05B35, 05E10}
\keywords{tropical geometry, matroid, tropical representation, matroidal representation, semifield}
\thanks{}

\begin{abstract}
We study tropical subrepresentations of the Boolean regular representation $\mathbb{B}[G]$ of a finite group $G$. These are equivalent to the matroids on ground set $G$ for which left-multiplication by each element of $G$ is a matroid automorphism. We completely classify the tropical subrepresentations of $\mathbb{B}[G]$ for rank 3. When $G$ is an abelian group, our approach can be seen as a generalization of Golomb rulers. In doing so, we also introduce an interesting class of matroids obtained from equivalence relations on finite sets. 
\end{abstract}

\maketitle

\tableofcontents


\section{Introduction}

Tropical geometry is a recent field of algebraic geometry with the aim to study algebro-geometric structures through associated combinatorial structures. 
Classically, tropical geometry has focused on the geometry and has not taken full advantage of the underlying semiring algebra until now. Since (valuated) matroids can be identified with tropical linear spaces, it is natural to ask whether the theory of semirings provides an algebraic foundation for matroids. More precisely, one may ask whether or not one can see matroids (and their variations) as certain modules over semirings. 


There have been various algebraic approaches to matroid theory in recent years.
Our main sources of motivation are Frenk's thesis \cite{frenk2013tropical} and works inspired by it, such as \cite{giansiracusa2018grassmann} and \cite{crowley2020module}, where the authors view (valuated) matroids as modules of Grassmann algebras over idempotent semifields. Recent work of Baker and Bowler \cite{baker2019matroids} provides a unifying framework for various generalizations of matroids. One may view a module-theoretic approach to matroids as contributing further to the generality of the Baker-Bowler theory. A different categorical approach developed by Nakamura and Reyes \cite{nakamura2023categories} and Beardsley and Nakamura \cite{beardsley2024projective} views (simple) matroids as $\mathbb{F}_1$-modules via their connection to projective geometries.



In \cite{giansiracusa2020matroidal}, Giansiracusa and Manaker introduce a notion of tropical representations. For a group $G$ and an idempotent semifield $K$, and a linear representation $\rho: G \to \text{GL}_n(K)$, a tropical representation is a $G$-invariant tropical linear space $T$ in $K^n$. When $K=\mathbb{B}$, the Boolean semifield, this is equivalent to a group homomorphism from $G$ to $\text{Aut}(M_T)$, where $M_T$ is the underlying matroid of $T$ and $\text{Aut}(M_T)$ is the automorphism group of $M_T$ - hence the name, a matroidal representation. Based on the fact that any $\text{GL}_n(K)$ for an idempotent semifield $K$ is a semidirect product of $S_n$ and $(K^\times)^n$, Giansiracusa and Manaker define also a notion of tropicalization for monomial representations, and they study realizability of tropical representations. An equivalent description of dimension $d$ tropical subrepresentations of a linear representation is the fixed points of the $G$-action on the Dressian $\text{Dr}(d,n)$ (see, \cite[Corollrary 3.0.4]{giansiracusa2020matroidal}).

Giansiracusa and Manaker show that there is very interesting interplay between group structures and matroid structures. For instance, if a finite group $G$ is abelian, $G$ is cyclic if and only if the tropical representation $U_{2,G}$ is realizable. Based on computing small examples, they conjecture that there is only one two-dimensional subrepresentation of the regular representation $\mathbb{B}[\mathbb{Z}_n]$ if and only if $n$ is prime. In \cite{marcus2024tropical}, Marcus and Phillips prove the conjecture. In this paper we extend some of their work further.






\subsection{Summary of results}

Our main result is to give a classification of group actions on matroids of rank 2 and 3. \footnote{See Section \ref{subsection: matroidal rep} for the precise definition of matroidal representation.}

\begin{nothma}[Proposition~\ref{prop:rk2matroids}, Corollary~\ref{corollary: thmc}]
Let $G$ be a finite group.
\begin{enumerate}
\item
Let $X$ be a $G$-set.  There is a one-to-one correspondence between $G$-invariant loopless rank $2$ matroid structures on $X$ and $G$-invariant nontrivial equivalence relations on $X$.
\item
There is a one-to-one correspondence between rank $3$ $G$-invariant matroids whose ground $G$-set is $G$ and pairs consisting of a proper subgroup $H$ and a nontrivial equivalence relation $\sim$ on $G / H - \bar{1}$ such that the following hold
\begin{enumerate}
    \item 
    If $\bar{a} \sim \bar{b}$ but $\bar{a}\neq \bar{b}$ then $\overline{a^{-1}} \sim \overline{a^{-1}b}$.
    \item 
    If $\bar{a}\sim \bar{b}$ and $h\in H$ then $\overline{ha} \sim \overline{hb}$.
\end{enumerate}
Furthermore, $H$ is the trivial group if and only if the matroid is simple.
\end{enumerate}
\end{nothma}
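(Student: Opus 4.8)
The plan is to pass from a $G$-invariant matroid to combinatorial data recorded by its parallel-class structure, and then to reconstruct the matroid from that data, checking the two passages are mutually inverse. For part (1), a loopless rank $2$ matroid on $X$ is the same as a partition of $X$ into at least two nonempty parallel classes (two elements are parallel exactly when they span a rank $1$ flat, and the $2$- and $3$-element circuits are recovered from the partition), i.e. a nontrivial (non-indiscrete) equivalence relation on $X$. Since every matroid automorphism permutes parallel classes, a $G$-action by automorphisms is precisely a $G$-action preserving the partition, which gives the bijection. This is the easy half.

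For part (2), let $M$ be a rank $3$ $G$-invariant matroid on $G$ with the left-multiplication action. First I would note that $M$ is loopless, since a loop would be carried to every element by transitivity, forcing rank $0$; hence $\parallel$ is an equivalence relation on $G$, and I set $H$ to be the parallel class of $1$. Using that left multiplication by $g$ is an automorphism (so $a \parallel b \iff ga \parallel gb$) together with transitivity of $\parallel$, I would show $H$ is a subgroup: closure follows from $b \parallel 1 \Rightarrow ab \parallel a \parallel 1$, and inverses from $a \parallel 1 \Rightarrow 1 \parallel a^{-1}$. The parallel classes are then exactly the left cosets $gH$, and $H$ is proper because rank $\geq 2$ forces more than one class. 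Passing to the simplification $\bar M$ gives a simple rank $3$ matroid on $G/H$ on which $G$ acts by left multiplication through automorphisms. I would then define $\bar a \sim \bar b$ on $G/H - \bar 1$ to mean that $\bar a,\bar b$ lie on the common line through $\bar 1$; this is an equivalence relation because the lines through a fixed point partition the remaining points, and it is nontrivial exactly because $\bar M$ has rank $3$ rather than $2$. Condition (b) comes from the automorphisms given by $h \in H$, which fix $\bar 1$ and hence preserve $\sim$; condition (a) comes from applying the automorphism $x \mapsto a^{-1}x$ to the collinear triple $\bar 1, \bar a, \bar b$, carrying it to the collinear triple $\overline{a^{-1}}, \bar 1, \overline{a^{-1}b}$.

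For the converse, given a pair $(H,\sim)$, I would define a ternary collinearity relation on distinct cosets by declaring $\bar x,\bar y,\bar z$ collinear iff $\overline{x^{-1}y} \sim \overline{x^{-1}z}$, handling the diagonal and $\bar 1$ cases by hand. Conditions (a) and (b) are precisely what legitimize this relation: replacing representatives $x,y,z$ by $xh_1, yh_2, zh_3$ multiplies both arguments on the left by $h_1^{-1}$, so well-definedness on cosets is condition (b) (applied with $h_1^{-1}$ and its inverse); and invariance under permuting $x,y,z$ reduces, after the swap $x \leftrightarrow y$, to the biconditional $\bar a \sim \bar b \iff \overline{a^{-1}} \sim \overline{a^{-1}b}$, which is condition (a) in both directions, since (a) is self-inverse (apply it to $(a^{-1}, a^{-1}b)$). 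Because the formula depends only on $x^{-1}y$ and $x^{-1}z$, left translations preserve collinearity automatically. I would then verify the linear-space axioms: transitivity of collinearity along a line follows by translating the base point to $\bar 1$, where collinearity of $\bar 1, \bar u, \bar v$ is literally $\bar u \sim \bar v$, and invoking transitivity of $\sim$. This yields a unique line through each pair and a simple matroid of rank exactly $3$ (rank $3$ rather than $2$ uses nontriviality of $\sim$). Pulling back along $G \to G/H$ produces a $G$-invariant rank $3$ matroid on $G$ with parallel classes the cosets of $H$, and the two constructions are mutually inverse by inspection. Finally, $M$ is simple iff every parallel class is a singleton iff $H=\{1\}$.

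The main obstacle is the converse: verifying that the translation-defined collinearity relation is genuinely a matroid. The delicate points are the coset bookkeeping, where $\overline{x^{-1}y}$ depends on the chosen representatives and only conditions (a) and (b) rescue well-definedness and full $S_3$-symmetry, together with the degenerate cases where two points coincide or one equals $\bar 1$. Once symmetry and translation-invariance are established, the remaining linear-space axioms reduce cleanly to $\sim$ being an equivalence relation.
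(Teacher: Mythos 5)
Your proposal is correct, and its skeleton is the same as the paper's: part (1) is the parallel-class dictionary of Proposition~\ref{prop:rk2matroids}; for part (2) you reduce a (necessarily loopless) rank~$3$ $G$-invariant matroid to a simple one on $G/H$ via parallel classes, encode the simple matroid by the relation ``$\{\bar 1,\bar a,\bar b\}$ is dependent,'' derive (a) and (b) from translation automorphisms, and in the converse direction declare $\{\bar x,\bar y,\bar z\}$ dependent when $\overline{x^{-1}y}\sim\overline{x^{-1}z}$, using (b) for coset well-definedness and (a) together with its self-converse (the paper's Remark~\ref{remark: equivalence class of inverse}) for $S_3$-symmetry --- exactly as in Lemma~\ref{lemma: rank 3 necessary} and Proposition~\ref{proposition: simple rank 3 classification}. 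Two executional differences are worth recording. First, the paper factors the reduction through a general rank-$k$ correspondence between loopless matroids and pairs (nontrivial equivalence relation, simple matroid on the quotient) --- Lemma~\ref{lemma: simple matroid quotient} and Proposition~\ref{prop: correspondence between loopless and simple matroids} --- combined with the identification of $G$-invariant equivalence relations on $G$ with proper subgroups (Remark~\ref{rmk: implying MP24}); you instead build $H$ directly as the parallel class of $1$ and check it is a subgroup, which is more self-contained but yields only the case at hand, whereas the paper's route also delivers its Theorem~C. Second, and more substantively, the paper's converse works with bases and circuits, so it needs Lemma~\ref{lemma: implying} to guarantee that every pair of cosets lies in a basis (hence that circuits have size at least $3$) before verifying circuit elimination; your independent-set, points-and-lines verification makes pairs independent by definition, so simplicity is automatic and the exchange axiom reduces to transitivity of collinearity, which after translating the basepoint to $\bar 1$ is literally transitivity of $\sim$. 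In your setup the content of Lemma~\ref{lemma: implying} becomes a corollary rather than a separate argument: if some pair extended to no basis, every coset would lie on its line, and transitivity would force all triples to be dependent, contradicting nontriviality of $\sim$. The trade-off is that you assert, rather than verify, two standard but nontrivial facts that the paper proves in full: that the simplification of a loopless matroid is a simple matroid of the same rank with the inherited $G$-action, and that inflating a simple matroid on $G/H$ along $G\to G/H$ is again a matroid (the content of Lemma~\ref{lemma: simple matroid quotient} and the surjectivity half of Proposition~\ref{prop: correspondence between loopless and simple matroids}); to be complete your write-up would need those verifications spelled out.
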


The above theorem generalizes \cite[Theorems  3.5 and 4.5]{marcus2024tropical} although our approach is completely different. See Remark \ref{rmk: implying MP24}.


We then focus on some special cases such as $G=\mathbb{Z}_p$. In this case, $G$-invariant matroid structures on $G$ may have an interesting connection with number theory (see Proposition \ref{proposition: cyclic}). To study the case when $G$ is an abelian group, we introduce a notion of \emph{distinct difference  system} (Definition \ref{definition: difference system}) of an abelian group $G$ - this is a collection of subsets $\{S_i\}$ of $G$ satisfying some conditions. Note that distinct difference systems can be seen as a generalization of \emph{modular Golomb ruler} (See Corollary \ref{cor: distinct difference property of equivalence class} and discussion after).

\begin{nothmb}[Theorems \ref{theorem: abelian case translation} and \ref{theorem: Classification in the prime cyclic case}]
Let $k\geq 3$.  Let $G$ be a finite abelian group with at least $k$ elements.  
\begin{enumerate}
    \item 
Fix a distinct difference system $S_1,\ldots S_l$.  Define a subset of $G$ to be a basis if it has $k$ elements and is not a translate of a subset of some $S_i$.  Then this yields a simple rank $k$ $G$-invariant matroid structure on $G$. 
    \item 
If $G$ is a prime cyclic group and $k = 3$ then all simple rank $3$ $G$-invariant matroid structures on $G$ arise in this way.    
\end{enumerate}
\end{nothmb}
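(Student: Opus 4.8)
The plan is to establish the two parts as the two directions of a bijection between distinct difference systems on $G$ and simple rank $k$ $G$-invariant matroid structures on $G$.

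For part (1), I would check directly that the family $\mathcal{B}$ of $k$-element subsets of $G$ that are \emph{not} translates of a subset of some $S_i$ satisfies the basis axioms of a matroid. The $G$-invariance is immediate: the property of being a translate of a subset of some $S_i$ is by construction preserved under translation, so $\mathcal{B}$ is translation-invariant and each left-multiplication permutes $\mathcal{B}$. For nonemptiness one must exhibit a single full-rank $k$-set; this can be done greedily, using that the distinctness of differences (at most $|G|-1$ distinct nonzero differences occur among the $S_i$) sharply limits how many points can sit in a common translate $S_i + t$. The substance of part (1) is the exchange axiom, and the way I would obtain it is to show that the distinct difference conditions make the associated closure operator well defined: any two distinct points of $G$ lie in at most one translate $S_i + t$. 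Granting this, $A \mapsto \overline{A}$ (the intersection of all translates $S_i+t$, and of $G$, that contain $A$) is a matroid closure operator, and the members of $\mathcal{B}$ are exactly its bases; simplicity follows because the constraints on the $S_i$ force every circuit to have at least three elements, so no singleton or pair is dependent, and the rank equals $k$ since the maximal independent sets are the members of $\mathcal{B}$.

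For part (2) I would run the construction in reverse. Let $M$ be a simple rank $3$ $\mathbb{Z}_p$-invariant matroid on $G=\mathbb{Z}_p$ and consider its nontrivial lines (rank $2$ flats with at least three points). Translation permutes these lines, and because $p$ is prime the stabilizer of any line is trivial — a nontrivial stabilizer would be all of $\mathbb{Z}_p$ and force the line to be the whole ground set, contradicting rank $3$ — so every translation orbit of lines has exactly $p$ members. Choosing one representative $S_i$ per orbit, I claim $S_1,\ldots,S_l$ is a distinct difference system that recovers $M$. The key point is distinctness of the differences: if a nonzero $d$ equalled $a-a'$ for $a,a'\in S_i$ and $b-b'$ for $b,b'\in S_j$, then $S_i-a'$ and $S_j-b'$ would both be rank $2$ flats containing $0$ and $d$, hence both equal to the closure of $\{0,d\}$; thus $S_i$ and $S_j$ lie in one orbit, so $i=j$, and the trivial stabilizer then forces $a'=b'$ and $a=b$. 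This is exactly the distinct difference condition, and the remaining conditions in the definition of a distinct difference system come directly from the matroid data recorded in Theorem~A(2) (compare Corollary~\ref{cor: distinct difference property of equivalence class}). Finally, the matroid produced from $S_1,\ldots,S_l$ by part (1) coincides with $M$: a triple is collinear in $M$ if and only if it lies on a line, if and only if it is a translate of a $3$-subset of some orbit representative $S_i$, which is precisely the condition for it to fail to be a basis of the constructed matroid.

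The step I expect to be the main obstacle is the exchange axiom in part (1) at general rank $k$. In rank $3$ the relevant geometry is transparent — the $S_i$ are lines and the closure operator is governed by the unique line through two points — but for larger $k$ one must show that the distinct difference conditions genuinely force the translates $S_i+t$ to behave like the low-rank flats of an honest matroid, so that no $k$-set becomes accidentally dependent through two overlapping translates and the closure operator is matroidal. By contrast, part (2) is comparatively direct once the orbit bookkeeping is in place, its only delicate point being the identification of distinctness of differences with the unique-line axiom.
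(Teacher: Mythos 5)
Your part (1) plan contains a genuine gap. The key lemma you isolate --- two distinct points of $G$ lie in at most one translate $S_i+t$ --- is correct, follows from the distinct-difference axioms, and is in fact the engine of the paper's own proof (it appears there as the repeated ``$a=b$, hence $i=j$'' computations). But the assertion that, granting this, $A\mapsto \overline{A}$ (the intersection of all translates containing $A$, or $G$ if there is none) is a matroid closure operator is false: that operator fails the Mac Lane--Steinitz exchange axiom. Concretely, take the system of Example~\ref{example: matroid cyclic} with $S=\{0,1,4\}\subseteq \mathbb{Z}_{13}$. Then $\overline{\{0\}}=\{0\}$ (three translates pass through $0$ and meet only there), while $\overline{\{0,2\}}=\mathbb{Z}_{13}$ since no translate contains $\{0,2\}$; so $1\in\overline{\{0,2\}}\setminus\overline{\{0\}}$, yet $2\notin\overline{\{0,1\}}=\{0,1,4\}$. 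The defect is that an independent pair lying on no translate is its own rank-$2$ flat in the actual matroid, but your operator closes it to all of $G$. The repair keeps your lemma and changes only the packaging: either verify basis exchange directly (if $B_1\setminus\{x\}$ lies in no translate, any $y\in B_2\setminus B_1$ works; if $B_1\setminus\{x\}\subseteq T$ for the unique translate $T$, then $B_2\not\subseteq T$ forces some $y\in B_2\setminus B_1$ with $y\notin T$, using $B_1\cap B_2\subseteq B_1\setminus\{x\}\subseteq T$, and then $B_1\setminus\{x\}\cup\{y\}$ lies in no translate), or note that the translates of size at least $k$ together with the uncovered $(k-1)$-sets form the hyperplanes of a rank-$k$ paving matroid. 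The paper instead verifies circuit elimination directly, again from the same unique-translate mechanism; it also needs, as you will, that no translate equals $G$, which follows because no $S_i$ can contain a configuration $\{0,s,s+s\}$.

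Your part (2) is correct and takes a genuinely different route from the paper's. The paper (Theorem~\ref{theorem: Classification in the prime cyclic case}) works with the equivalence relation of Proposition~\ref{proposition: simple rank 3 classification}, fixes a maximal distinct difference system whose members are equivalence classes with $0$ adjoined, and uses the dichotomy of Lemma~\ref{lemma: distinct difference pair dichotomy} to show the associated sets $T_{i,z}$ partition $\mathbb{Z}_p^\times$. You instead take the nontrivial lines (rank-$2$ flats with at least three points), note that translation permutes them with trivial stabilizers when $p$ is prime, and choose one representative per orbit normalized to contain $0$; the distinct-difference axiom becomes exactly the statement that two points lie on at most one line, and the condition $S_i\cap S_j=\{0\}$ holds because distinct lines in a simple matroid meet in at most one point. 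Both arguments then recover $M$ from the observation that the dependent triples are precisely the triples on a line. Your version avoids the maximality argument and the dichotomy lemma at the cost of invoking standard facts about flats; it is arguably the more transparent of the two, and once part (1) is repaired as above, your overall proof of Theorem B is complete.
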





Finally, we link certain equivalence relations on a finite set $E$ and matroid structures on $E$. We also provide $G$-invariant version of this correspondence.



\begin{nothmc}[Proposition \ref{prop: correspondence between loopless and simple matroids}]
Let $E$ be a finite set.  Let $k\geq 2$.  
\begin{enumerate}
    \item 
There is a one-to-one correspondence between loopless rank $k$ matroid structures on $E$ and pairs consisting of a nontrivial equivalence relation $\sim$ on $E$ and a simple rank $k$ matroid structures on $E/\sim$.
    \item 
Let $G$ be a group and fix a $G$-action on $E$.  There is a one-to-one correspondence between $G$-invariant loopless rank $k$ matroid structures on $E$ and pairs consisting of a nontrivial $G$-invariant equivalence relation $\sim$ on $E$ and a simple rank $k$ $G$-invariant matroid structures on $E/\sim$.
\end{enumerate}   
\end{nothmc}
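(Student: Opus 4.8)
The plan is to prove part (1) by exhibiting two mutually inverse maps, and then to deduce part (2) by checking that both maps are compatible with any $G$-action. Given a loopless rank $k$ matroid $M$ on $E$, I would define $a \sim_M b$ to mean that $\{a,b\}$ has rank $1$ in $M$ (equivalently $a=b$ or $\{a,b\}$ is a circuit); this is the parallel relation, and $\mathrm{si}(M)$ denotes the simplification, a simple matroid on $E/\!\sim_M$. This gives the forward map $M \mapsto (\sim_M,\ \mathrm{si}(M))$. In the other direction, given a nontrivial equivalence relation $\sim$ with quotient map $\pi\colon E \to E/\!\sim$ and a simple rank $k$ matroid $N$ on $E/\!\sim$, I would define a set function on $E$ by $r(S) := \mathrm{rk}_N(\pi(S))$ and let $M_{\sim,N}$ be the resulting structure, giving the backward map $(\sim,N)\mapsto M_{\sim,N}$.

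For the forward map, looplessness makes $\sim_M$ reflexive while symmetry and transitivity are the standard properties of the parallel relation in a loopless matroid; since $k\geq 2$ the matroid cannot consist of a single parallel class, so $\sim_M$ has at least two classes and is nontrivial, and $\mathrm{si}(M)$ is simple of the same rank $k$ by construction. For the backward map, the point to verify is that $r(S)=\mathrm{rk}_N(\pi(S))$ is a matroid rank function: monotonicity, $r(\emptyset)=0$, and unit increase follow from $\pi(S\cup\{x\})=\pi(S)\cup\{\pi(x)\}$, while submodularity follows from $\pi(S\cup T)=\pi(S)\cup\pi(T)$, $\pi(S\cap T)\subseteq\pi(S)\cap\pi(T)$, and submodularity of $\mathrm{rk}_N$. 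Because $N$ is simple it is loopless, so $r(\{x\})=\mathrm{rk}_N(\{\pi(x)\})=1$ for all $x$, making $M_{\sim,N}$ loopless; and $r(E)=\mathrm{rk}_N(E/\!\sim)=k$, so it has rank $k$.

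I expect the mutual inverse statement to be the crux, and it is exactly where simplicity of $N$ is used. Going $(\sim,N)\mapsto M_{\sim,N}\mapsto(\sim_{M},\mathrm{si}(M))$, for $a\neq b$ one has $a\sim_M b$ iff $\mathrm{rk}_N(\pi(\{a,b\}))=1$; if $\pi(a)\neq\pi(b)$, then $\pi(\{a,b\})$ is a two-element subset of the simple matroid $N$ and hence has rank $2$, a contradiction, so $a\sim_M b$ forces $\pi(a)=\pi(b)$, i.e. $a\sim b$, and the reverse implication is immediate; thus $\sim_M=\sim$, and then $\mathrm{si}(M_{\sim,N})=N$ since the quotient ground sets coincide and the rank functions agree on transversals. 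Conversely, for a loopless $M$ I would invoke the standard fact that adjoining elements parallel to existing ones leaves the rank unchanged, giving $\mathrm{rk}_M(S)=\mathrm{rk}_{\mathrm{si}(M)}(\pi(S))$, which says precisely that $M_{\sim_M,\mathrm{si}(M)}=M$.

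For part (2) I would check that every ingredient is natural in the $G$-action. If $M$ is $G$-invariant then each $g\in G$ preserves rank, hence preserves $\sim_M$, so $\sim_M$ is $G$-invariant, the action descends to $E/\!\sim_M$, and $\mathrm{si}(M)$ becomes a $G$-invariant simple matroid. Conversely, if $\sim$ and $N$ are $G$-invariant then $\pi$ is $G$-equivariant and $r(gS)=\mathrm{rk}_N(\pi(gS))=\mathrm{rk}_N(g\cdot\pi(S))=\mathrm{rk}_N(\pi(S))=r(S)$, so $M_{\sim,N}$ is $G$-invariant. Since the two maps of part (1) preserve $G$-invariance in both directions, they restrict to the asserted bijection between $G$-invariant objects, which yields part (2).
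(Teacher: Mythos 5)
Your proof is correct, and while the underlying bijection is necessarily the same as the paper's (the parallel relation $\sim_M$ with the simplification in one direction, and the ``blow-up'' of $N$ along the fibers of $\pi$ in the other), your verification runs through rank functions where the paper works entirely with independent sets. The paper proves its quotient lemma by hand: transitivity of $\sim_M$ and the exchange axiom for the quotient matroid are derived from circuit elimination, and the surjectivity step defines the lifted matroid by declaring $I \subseteq E$ independent when $I$ meets each fiber of $\pi$ at most once and $\pi(I)$ is independent in $N$, then checks exchange again; it also frames part (1) as injectivity plus surjectivity of a single map rather than as two mutually inverse maps. You instead pull back the rank function, $r(S) = \mathrm{rk}_N(\pi(S))$, so that the matroid axioms for the lifted structure follow in essentially one line from $\pi(S \cup T) = \pi(S) \cup \pi(T)$, $\pi(S \cap T) \subseteq \pi(S) \cap \pi(T)$, and submodularity of $\mathrm{rk}_N$; note that your construction agrees with the paper's, since $r(I)=|I|$ forces both $|\pi(I)|=|I|$ and the independence of $\pi(I)$. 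What your route buys is brevity and a clean isolation of where simplicity of $N$ is used (namely, in recovering $\sim$ from $M_{\sim,N}$); what it costs is self-containedness, since you invoke standard facts about simplification (the parallel relation is an equivalence relation, $\mathrm{si}(M)$ is simple of the same rank, and $\mathrm{rk}_M(S) = \mathrm{rk}_{\mathrm{si}(M)}(\pi(S))$) that the paper proves from scratch. The equivariance argument for part (2) is essentially identical in both treatments, resting on the equivalence between basis-invariance and rank-preservation for a group element.
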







\bigskip

\textbf{Acknowledgment} J.J. acknowledges AMS-Simons Research Enhancement Grant for Primarily Undergraduate Institution (PUI) Faculty during the writing of this paper, and parts of this research was done during his visit to the Institute for Advanced Study supported by the Bell System Fellowship Fund. K.M. acknowledges the support of the Simons Foundation, Travel Support for Mathematicians.

\section{Preliminaries}\label{section: preliminaries}


A \textit{semiring} is a set $R$ with two binary operations (addition $+$ and multiplication $\cdot$ ) satisfying the same axioms as rings, except the existence of additive inverses. In this paper, a semiring is always assumed to be commutative. A semiring $(R,+,\cdot)$ is \emph{semifield} if $(R\backslash\{0_R\},\cdot)$ is a group. A semiring $R$ is said to be \emph{zero-sum free} if $a+b=0$ implies $a=b=0$ for all $a,b \in R$. 

We will denote by $\mathbb{B}$ the semifield with two elements $\{1,0\}$, where $1$ is the multiplicative identity, $0$ is the additive identity and $1+1 = 1$. 
The {\it tropical semifield}, denoted $\mathbb{T}$, is the set $\mathbb{R}  \cup \{-\infty\} $ with the $+$ operation to be the maximum and the $\cdot$ operation to be the usual addition, with $-\infty = 0_\mathbb{T}$. 

\begin{mydef}\label{definition: bend relation}
	Let $f$ be a polynomial in the polynomial semiring $\T[x_1,\dots, x_n]$. The \emph{bend relations} of $f$ is the set of equivalences $\{ f\sim f_{\hat{i}}\}$, where $f_{\hat{i}}$ is the polynomial $f$ after removing its $i$-th monomial. For a subset $S$  of $\T[x_1,\dots, x_n]$ the \emph{bend congruence} of $S$ is the congruence generated by the bend relations of all $f \in S$.
\end{mydef}

\begin{mydef}
By a \emph{tropical linear space} $V$ in $\mathbb{T}^n$, we mean the subset of $\mathbb{T}^n$ obtained in the following way: there exists a congruence relation on $\mathbb{T}[x_1,\dots,x_n]$ generated by $\{L_i \sim L_j\}_{i,j \in I}$, where $L_i \in \mathbb{T}[x_1,\dots,x_n]$ are linear polynomials, such that 
\[
V=\Hom_\mathbb{T}(\mathbb{T}[x_1,\dots,x_n]/\sim,\mathbb{T}). 
\]
and the congruence relation $\sim$ is a bend relation as in Definition \ref{definition: bend relation} which comes from circuits of a matroid.
\end{mydef}

For an equivalent definition of tropical linear spaces we point the reader to Section 2.1 in \cite{giansiracusa2020matroidal}.

\subsection{Matroidal representation of finite groups} \label{subsection: matroidal rep}

\begin{mydef}\label{def:TR}
Let $K$ be an idempotent semifield. For a given (linear) representation $\phi:G \to \text{GL}_n(K)$, a \emph{tropical representation} of $G$ is a $G$-invariant tropical linear space $V$ in $K^n$. 
By the rank (or dimension) of a tropical representation we mean the rank of the tropical linear space, or equivalently, the rank of the corresponding matroid.
\end{mydef}

In \cite{giansiracusa2020matroidal} Giansiracusa and Manaker use the term representation to mean both the linear representation $\phi$ of $G$ and the tropical representation from Defintion~\ref{def:TR}.

In the case where $K = \mathbb{B}$, and $M_V$ is the matroid associated to a tropical representation $V$ in $\mathbb{B}^n$, Giansiracusa and Manaker proved that the tropical representation is equivalent to a group homomorphism $G \to \text{Aut}(M_V)$, where $\text{Aut}(M_V)$ is the automorphism group of the matroid $M_V$. As such, Giansiracusa and Manaker referred to tropical representations as matroidal representations when $K=\mathbb{B}$.

\section{Group actions on matroids of low rank}
Tropical representations of rank $2$ or $3$ over $\mathbb{B}$ have recently been studied by \cite{marcus2024tropical}.  Such representations correspond to group actions on matroids of low rank.  In this section, we prove further results on such group actions. When the context is clear, we often write $A+x$ (resp.~$A-x$) for $A\cup \{x\}$ (resp.$A - \{x\}$, especially when we work on independent subsets/ bases/circuits of matroids. When a matroid is $G$-invariant, for an abelian group $G$, to avoid any confusion, we use $\cup$ notation instead of $+$.

\subsection{Group actions on matroids of rank 2}

\cite[Theorem 1]{marcus2024tropical} relates subgroups of $G$ to $G$-invariant rank 2 matroids with base set $G$.  By working with the matroid language, we can simplify the argument and extend it to all $G$-actions on rank 2 matroids without loops.

The key idea is a relationship between rank $2$ matroids and equivalence relations.  For motivation (in the linear matroid case), observe that two nonzero vectors are linearly dependent if and only if they determine the same point in projective space, and this is an equivalence relation.

\begin{lem}\label{lemma: equivalence relation and rank 2}
Let $M$ be a matroid without loops on $E$.  Define a relation $\sim$ on $E$ by $x\sim y$ if $x = y$ or $\{x, y\}$ is dependent.  Then $\sim$ is an equivalence relation.
\end{lem}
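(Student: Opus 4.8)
The plan is to verify the three defining properties of an equivalence relation, treating reflexivity and symmetry as immediate and concentrating the effort on transitivity. Reflexivity holds by fiat, since the clause ``$x = y$'' is built into the definition, so $x \sim x$ always. Symmetry is equally immediate: the condition ``$x = y$ or $\{x,y\}$ is dependent'' is manifestly symmetric in $x$ and $y$ because $\{x,y\} = \{y,x\}$, and dependence of a set does not depend on how its elements are listed.

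For transitivity, suppose $x \sim y$ and $y \sim z$; I would first dispose of the degenerate cases. If any two of $x, y, z$ coincide, the desired conclusion $x \sim z$ follows directly from one of the two hypotheses (and is trivial when $x = z$). So the substantive case is that $x, y, z$ are pairwise distinct, where the hypotheses say exactly that $\{x,y\}$ and $\{y,z\}$ are dependent, and the goal is to deduce that $\{x,z\}$ is dependent.

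The key reformulation I would use is rank-theoretic. Because $M$ has no loops, every singleton has rank $1$, so for distinct $u, v$ the set $\{u,v\}$ is dependent if and only if $r(\{u,v\}) = 1$ (looplessness rules out rank $0$, and dependence rules out rank $2$). Thus the hypotheses become $r(\{x,y\}) = r(\{y,z\}) = 1$. Applying submodularity of the rank function to $A = \{x,y\}$ and $B = \{y,z\}$, whose intersection is $\{y\}$ and union is $\{x,y,z\}$, gives
\[
r(\{x,y,z\}) \;\le\; r(\{x,y\}) + r(\{y,z\}) - r(\{y\}) \;=\; 1 + 1 - 1 \;=\; 1.
\]
By monotonicity $r(\{x,z\}) \le r(\{x,y,z\}) \le 1$, while looplessness forces $r(\{x,z\}) \ge 1$; hence $r(\{x,z\}) = 1$, so $\{x,z\}$ is dependent and $x \sim z$.

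I expect the only real subtlety to be transitivity, and within it the bookkeeping of the equal-element cases together with the correct use of looplessness, which is what guarantees that no singleton is a loop and that the rank never drops to $0$. An equivalent route, which I might mention, replaces submodularity by the closure operator: the equation $r(\{x,y\}) = 1$ is the statement $y \in \operatorname{cl}(\{x\})$, and the closure axioms (extensivity, monotonicity, idempotence) yield $\operatorname{cl}(\{x\}) = \operatorname{cl}(\{y\}) = \operatorname{cl}(\{z\})$, whence $z \in \operatorname{cl}(\{x\})$ and $\{x,z\}$ is dependent. Either way, once the problem is translated into rank or closure language, the matroid axioms do all the work.
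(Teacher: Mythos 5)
Your proof is correct, but it takes a genuinely different route from the paper. The paper stays entirely in the circuit axiomatization: since $M$ is loopless, the dependent two-element sets $\{x,y\}$ and $\{y,z\}$ cannot properly contain any dependent set (their proper subsets are singletons, hence independent), so they are themselves circuits, and the circuit elimination axiom applied to these two circuits with common element $y$ immediately yields that $\{x,z\}$ is dependent. You instead translate everything into the rank function: looplessness pins $r(\{x,y\}) = r(\{y,z\}) = r(\{y\}) = 1$, submodularity gives $r(\{x,y,z\}) \le 1$, and monotonicity plus looplessness force $r(\{x,z\}) = 1 < 2$. Both arguments are equally short and both hinge on looplessness, but in different ways: the paper needs it to upgrade ``dependent'' to ``circuit'' so that circuit elimination applies, while you need it to keep ranks from dropping to $0$. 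The paper's choice of circuit language is not accidental --- the identical pattern reappears in its rank-$3$ analysis (Lemma~\ref{lemma: rank 3 necessary}), where simplicity plays the role of looplessness in promoting dependent $3$-sets to circuits before eliminating; your rank/closure formulation is the more standard textbook argument (it is essentially the fact that parallel classes partition a loopless matroid) and arguably more robust, since it never requires identifying which dependent sets are minimal, but it connects less directly to the circuit-elimination arguments the paper reuses later.
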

\begin{proof}
Reflexivity and symmetry are clear.  These properties imply the transitive law in the case two of the three elements involved are equal. So, suppose $x,y,z\in M$ are distinct with $x\sim y$ and $y\sim z$.  By the loopless assumption, $\{x, y\}$ and $\{y, z\}$ are circuits.  By circuit elimination, $\{x, z\}$ is dependent so $x\sim z$.
\end{proof}


In what follows, by a \emph{nontrivial equivalence relation} we mean an equivalence relation with at least two equivalence classes.

\begin{lem}\label{lemma: coorespondence}
Let $E$ be a nonempty finite set. The equivalence relations defined in Lemma \ref{lemma: equivalence relation and rank 2} induce a one-to-one correspondence between loopless rank $2$ matroids with ground set $E$ and nontrivial equivalence relations on $E$. 
\end{lem}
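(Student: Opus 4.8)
The plan is to establish a bijection by constructing maps in both directions and showing they are mutually inverse. In one direction, Lemma~\ref{lemma: equivalence relation and rank 2} already hands us a well-defined assignment sending a loopless rank $2$ matroid $M$ on $E$ to its equivalence relation $\sim$; the first task is to check this relation is \emph{nontrivial}, i.e.\ has at least two classes. Since $M$ has rank $2$, there must exist an independent set of size $2$, say $\{x,y\}$, which by definition means $x\not\sim y$, so there are at least two equivalence classes. For the reverse direction, given a nontrivial equivalence relation $\sim$ on $E$, I would define a matroid $M_\sim$ by declaring a subset $B\subseteq E$ to be a basis exactly when $B=\{x,y\}$ with $x\not\sim y$ (equivalently, the circuits are the two-element sets $\{x,y\}$ with $x\sim y$, $x\neq y$).

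The key step is to verify that $M_\sim$ really is a matroid, which I would do via the basis exchange axiom (or equivalently by checking the circuit axioms). Nontriviality of $\sim$ guarantees at least one basis exists; all bases have size $2$, giving rank $2$. For basis exchange: given two bases $\{x,y\}$ and $\{z,w\}$ and an element, say $x$, to remove, I must find an element of $\{z,w\}$ whose union with $\{y\}$ is again a basis, i.e.\ lies in distinct classes from $y$. If both $z$ and $y$ were equivalent \emph{and} $w$ and $y$ were equivalent, then by transitivity $z\sim w$, contradicting that $\{z,w\}$ is a basis; hence at least one of $\{y,z\}$, $\{y,w\}$ is independent, which is exactly what exchange requires. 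I would also confirm looplessness: a loop would be a singleton circuit, but all circuits have size $2$, so $M_\sim$ is loopless.

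Finally I would check the two constructions are inverse to each other. Starting from a loopless rank $2$ matroid $M$, forming $\sim$ and then $M_\sim$ returns the same matroid because in any loopless rank $2$ matroid the bases are precisely the $2$-element independent sets, which are exactly the pairs in distinct classes; and the circuits are the $2$-element dependent sets (no larger circuits can occur since every $2$-set is either independent or a circuit when the rank is $2$ and there are no loops). Conversely, starting from $\sim$, building $M_\sim$ and reading off its relation recovers $\sim$ by construction. The main obstacle, such as it is, lies in the bijectivity bookkeeping rather than in any single hard argument: one must be careful that a loopless rank $2$ matroid has no circuits of size larger than $2$, so that the equivalence relation captures \emph{all} the dependence data. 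This follows because any circuit $C$ satisfies $|C|\leq \mathrm{rk}(M)+1 = 3$, and a $3$-element circuit would contain a $2$-element independent subset spanning a rank $2$ flat, forcing the third element to be a loop or parallel---a short case analysis rules out genuine $3$-circuits in the loopless rank $2$ setting, confirming that $\sim$ records the complete matroid structure.
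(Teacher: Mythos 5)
Your construction and the core verifications essentially reproduce the paper's proof: the forward map comes from Lemma~\ref{lemma: equivalence relation and rank 2}, with nontriviality forced by the rank~$2$ hypothesis; the backward map declares the pairs lying in distinct classes to be the bases; exchange follows from transitivity of $\sim$; and the two maps invert each other because a rank~$2$ matroid is determined by its $2$-element independent sets. (The paper verifies the independent-set exchange axiom rather than basis exchange, a cosmetic difference.) Up to your last paragraph the argument is sound.

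However, the closing paragraph asserts something false: it is \emph{not} true that a loopless rank~$2$ matroid has no circuits of size~$3$. The uniform matroid $U_{2,3}$ is loopless of rank~$2$, and its unique circuit $\{a,b,c\}$ has three elements; more generally, whenever the relation $\sim$ has at least three classes, any set of three elements taken from three distinct classes is a $3$-element circuit. Your argument for the claim breaks down at ``forcing the third element to be a loop or parallel'': the third element of a $3$-circuit lies in the closure of the other two, but that closure is all of $E$ (a $2$-element independent set is spanning in rank~$2$), and membership in it does not force pairwise dependence with either of the two elements. Fortunately, the bijection does not need this claim at all. A matroid is determined by its bases; in rank~$2$ the bases are exactly the independent $2$-sets, i.e.\ the pairs lying in distinct classes, so $\sim$ determines $M$ regardless of what the larger circuits look like --- this is precisely how the paper argues injectivity. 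You should delete the spurious ``no $3$-circuits'' step and rest the mutual-inverse argument on the bases alone, which the first half of your closing paragraph already does correctly.
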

\begin{proof}
We consider the map from loopless rank $2$ matroids with ground set $E$ to equivalence relations on $E$, which sends a matroid to the equivalence relation $\sim$, where $x\sim y$ means $x=y$ or $\{x, y\}$ is dependent.  Observe that an equivalence relation constructed this way is nontrivial - by the rank $2$ assumption, there is some independent set $\{x, y\}$ so $x\not\sim y$.

We first show injectivity.  Observe that a matroid structure is determined by the set of bases, which is the set of independent sets of size $2$.  Equivalently, it is determined by specifying which sets of size $2$ are dependent, and this is what the equivalence relation $\sim$ does.

For surjectivity, let $\sim$ be a nontrivial equivalence relation.  Call a subset $I\subseteq E$ independent if it is contained in a set of the form $\{x, y\}$ for some $x\not\sim y$.  Clearly a subset of an independent set is independent. Since $\sim$ is nontrivial, for all $z\in E$, there exists $w\in E$ such that $z\not\sim w$, i.e. $\{z, w\}$ is independent.  This has two consequences - any singleton $\{z\}$ is independent, and any independent set of size less than $2$ can be extended to one of size $2$. 

Let $\mathcal{I}$ be the set of independent subsets defined above from $\sim$. We claim that $\mathcal{I}$ defines a matroid on $E$. In fact, we only have to check the exchange axiom. Let $I, J$ be independent sets with $|I| < |J|$.  The empty set case is easy, so assume $|I| = 1$ and $|J| = 2$.  Let $x, y, z$ be such that $J = \{x,y\}$ and $I=\{z\}$.  Then $x\not\sim y$, and hence $z\not\sim x$ or $z\not\sim y$ since otherwise $x \sim y$ by the transitivity of $\sim$. It follows that either $\{x, z\}$ or $\{y, z\}$ is independent, showing the exchange axiom. Moreover, by applying the transitive law together with the assumption that an equivalence relation is nontrivial, one sees that $M$ is loopless. Therefore, $M=(E,\mathcal{I})$ is a rank $2$ matroid without loops. 

Finally, it is clear from construction that this matroid structure maps to $\sim$, establishing surjectivity.
\end{proof}

\begin{mydef}
Let $G$ be a finite group and $X$ be a finite $G$-set. Let $M$ be a matroid on $X$. We say that $M$ is $G$-invariant if for any $g \in G$ and a base $B \in \mathcal{B}$, we have $gB \in \mathcal{B}$. 
\end{mydef}

\begin{pro}\label{prop:rk2matroids}
Let $G$ be a finite group and $X$ be a finite $G$-set.  There is a one-to-one correspondence between $G$-invariant loopless rank $2$ matroid structures on $X$ and $G$-invariant nontrivial equivalence relations on $X$.
\end{pro}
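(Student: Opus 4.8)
The plan is to upgrade the bijection of Lemma~\ref{lemma: coorespondence} to a $G$-equivariant statement, so that $G$-invariance on each side corresponds to fixed points of one and the same action. First I would make precise how $G$ acts on both sides. A group element $g\in G$, being a bijection of $X$, acts on loopless rank $2$ matroids by pushing bases forward, $M\mapsto g\cdot M$ where $g\cdot M$ has bases $\{gB : B\text{ a base of }M\}$, and acts on nontrivial equivalence relations by $x\,(g\cdot\sim)\,y \iff g^{-1}x\sim g^{-1}y$. With these conventions, the condition that $M$ is $G$-invariant in the sense of the definition preceding the proposition (namely $gB\in\mathcal{B}$ for every base $B$) is equivalent to $g\cdot M=M$ for all $g$: since $g$ is a bijection and $\mathcal{B}$ is finite, $g\mathcal{B}\subseteq\mathcal{B}$ forces $g\mathcal{B}=\mathcal{B}$. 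Likewise $\sim$ is $G$-invariant exactly when $g\cdot\sim\,=\,\sim$ for all $g$.

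The key step is then to verify the equivariance $\Phi(g\cdot M)=g\cdot\Phi(M)$, where $\Phi$ denotes the map $M\mapsto\,\sim_M$ of Lemma~\ref{lemma: equivalence relation and rank 2}. This is purely formal once one recalls, as in the injectivity argument of Lemma~\ref{lemma: coorespondence}, that a loopless rank $2$ matroid is determined by which $2$-element subsets are dependent, and that $\sim_M$ is defined solely in terms of this data. Explicitly, $\{x,y\}$ is dependent in $M$ if and only if $\{gx,gy\}$ is dependent in $g\cdot M$, so $x\sim_M y \iff gx\,(\Phi(g\cdot M))\,gy$, which is precisely the identity $\Phi(g\cdot M)=g\cdot\Phi(M)$ after relabeling.

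With equivariance in hand the proposition follows immediately: $M$ is $G$-invariant iff $g\cdot M=M$ for all $g$, iff (applying the bijection $\Phi$ and using $\Phi(g\cdot M)=g\cdot\Phi(M)$) $g\cdot\Phi(M)=\Phi(M)$ for all $g$, iff $\sim_M$ is $G$-invariant. Since $\Phi$ is already a bijection by Lemma~\ref{lemma: coorespondence}, it restricts to a bijection between the $G$-fixed points on each side, i.e.\ between $G$-invariant loopless rank $2$ matroid structures on $X$ and $G$-invariant nontrivial equivalence relations on $X$; nontriviality is preserved automatically since it is built into the codomain of $\Phi$. I expect no serious obstacle beyond this bookkeeping; the only point requiring genuine care is matching the base-preserving definition of $G$-invariance for matroids with the dependent-$2$-subset description underlying $\sim_M$, which is exactly where the loopless and rank $2$ hypotheses are used.
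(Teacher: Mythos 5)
Your proof is correct and is essentially the paper's argument in equivariant packaging: both rest on the bijection of Lemma~\ref{lemma: coorespondence}, the observation that $\{x,y\}$ is a basis iff $x\not\sim y$, and the use of $g^{-1}$ to upgrade one-sided implications to equalities. The paper simply verifies directly that the two invariance conditions correspond, while you phrase the same verification as $\Phi(g\cdot M)=g\cdot\Phi(M)$ and match fixed points; no substantive difference.
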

\begin{proof}
From Lemma \ref{lemma: coorespondence}, we have a one-to-one correspondence between loopless rank $2$ matroid structures on $X$ and nontrivial equivalence relations on $X$.  We must show that the matroid structure is $G$-invariant if and only if the equivalence relation is $G$-invariant.  Equivalently, we must show that the following two statements are equivalent for $g\in G$ and $x,y\in X$:
\begin{enumerate}
    \item If $x\sim y$ then $gx\sim gy$.
    \item If $\{x, y\}$ is a basis then $\{gx, gy\}$ is a basis.
\end{enumerate}
Observe that the first statement is equivalent (via use of $g^{-1}$) to saying that $x\not\sim y$ implies $gx\not\sim gy$.  The equivalence of the two statements follows by observing that $\{x,y\}$ is a basis if and only if $x\not\sim y$.
\end{proof}

\begin{rmk}\label{rmk: implying MP24}
Observe that $G$-invariant nontrivial equivalence relations on $G$ are precisely the congruence kernel \footnote{ The congruence-kernel of $\phi: R \to R'$  is the pullback of the trivial congruence on $R'$.} of maps $G\rightarrow G/H$ with $H$ a proper subgroup of $G$ and $G/H$ is the set of cosets.  Observe also that any $G$-invariant rank $2$ matroid with ground $G$-set $G$ is loopless, since if there were a loop, by transitivity of the action all elements would be loops.  So Proposition \ref{prop:rk2matroids} implies that $G$-invariant rank $2$ matroids with ground $G$-set equal to $G$ are in one-to-one correspondence with proper subgroups of $G$.  This is essentially \cite[Theorem 3.5]{marcus2024tropical}.
\end{rmk}

\subsection{Group actions on matroids of rank 3}

We next turn to matroids of rank $3$.  To simplify the problem, we will later focus on the case where the matroid is simple and the ground set of the matroid is a cyclic group. Recall that a simple matroid is a matroid such that all circuits contain at least three elements.

In this subsection, we let $G$ be a finite group, $H$ a proper subgroup, and $G/H$ the set of cosets. We will denote $aH \in G/H$ by $\bar{a}$. Finally, we will assume that $[G:H] \geq 3$ since we will consider rank $3$ matroid structures on $G/H$. 

\begin{lem}\label{lemma: rank 3 necessary}
Consider a simple rank $3$ $G$-invariant matroid $M$ whose ground $G$-set is $G / H$.  We define a relation on $G / H - \{\bar{1}\}$ by $\bar{a}\sim \bar{b}$ if $\bar{a} = \bar{b}$ or if $\{\bar{1}, \bar{a}, \bar{b}\}$ is dependent.  Then
\begin{enumerate}
    \item 
$\sim$ is a nontrivial equivalence relation.
    \item 
If $\bar{a} \sim \bar{b}$ but $\bar{a}\neq \bar{b}$ then $\overline{a^{-1}} \sim \overline{a^{-1}b}$.
    \item 
For any $h\in H$ and $a,b\in G - H$, if $\bar{a}\sim\bar{b}$ then $\overline{ha} \sim \overline{hb}$.
    \item 
The bases of the matroid $M$ are precisely the sets of the form $\{\bar{a}, \bar{b}, \bar{c}\}\subseteq G/H$, where all three elements are distinct and $\overline{a^{-1}b} \not\sim \overline{a^{-1}c}$.
\end{enumerate}
\end{lem}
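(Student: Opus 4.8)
The plan is to exploit two facts throughout. First, since $M$ is simple of rank $3$, every subset of size $1$ or $2$ is independent, and a set of three \emph{distinct} cosets is dependent exactly when it is a circuit; consequently the bases are precisely the independent sets of size $3$. Second, $G$-invariance says left multiplication by any $g \in G$ is a bijection of $G/H$ carrying bases to bases, hence independent sets to independent sets and circuits to circuits. The whole argument then reduces to \emph{translating} a configuration by a suitable group element so that it becomes anchored at $\bar 1$, where the definition of $\sim$ applies directly, together with the routine coset bookkeeping that $\overline{xy} = \bar 1 \iff \bar y = \overline{x^{-1}}$; in particular $\overline{a^{-1}b} = \bar 1 \iff \bar a = \bar b$.

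For (1), reflexivity and symmetry are immediate from the definition, since the defining set $\{\bar 1, \bar a, \bar b\}$ is symmetric in $\bar a, \bar b$. For transitivity I would take distinct $\bar a, \bar b, \bar c$ (the degenerate cases being trivial) with $\bar a \sim \bar b$ and $\bar b \sim \bar c$; then $C_1 = \{\bar 1, \bar a, \bar b\}$ and $C_2 = \{\bar 1, \bar b, \bar c\}$ are circuits, and applying the circuit elimination axiom at $\bar b \in C_1 \cap C_2$ produces a circuit inside $\{\bar 1, \bar a, \bar c\}$; simplicity forces this circuit to be all of $\{\bar 1, \bar a, \bar c\}$, so $\bar a \sim \bar c$. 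Nontriviality follows because $\bar 1$ is not a loop, so $\{\bar 1\}$ extends to a basis $\{\bar 1, \bar a, \bar b\}$; this set is independent, whence $\bar a \not\sim \bar b$, exhibiting two classes.

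Parts (2) and (3) have the same shape. For (2), from $\bar a \sim \bar b$ and $\bar a \ne \bar b$ the set $\{\bar 1, \bar a, \bar b\}$ is a circuit; multiplying on the left by $a^{-1}$ yields the circuit $\{\overline{a^{-1}}, \bar 1, \overline{a^{-1}b}\}$. Since $a^{-1}\cdot$ is a bijection these three cosets remain distinct, and one checks $\overline{a^{-1}} \ne \bar 1$ (as $\bar a \ne \bar 1$) and $\overline{a^{-1}b} \ne \bar 1$ (as $\bar a \ne \bar b$), so both lie in $G/H - \{\bar 1\}$ and the circuit gives $\overline{a^{-1}} \sim \overline{a^{-1}b}$. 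For (3) I multiply $\{\bar 1, \bar a, \bar b\}$ on the left by $h \in H$; because $\bar h = \bar 1$ the image is $\{\bar 1, \overline{ha}, \overline{hb}\}$, and since $a, b \notin H$ we get $\overline{ha}, \overline{hb} \ne \bar 1$, while $\bar a \ne \bar b$ keeps them distinct, so the circuit yields $\overline{ha} \sim \overline{hb}$ (the case $\bar a = \bar b$ being immediate). For (4), given distinct $\bar a, \bar b, \bar c$, I translate by $a^{-1}$: by $G$-invariance $\{\bar a, \bar b, \bar c\}$ is a basis iff $\{\bar 1, \overline{a^{-1}b}, \overline{a^{-1}c}\}$ is. The latter consists of three distinct cosets with $\overline{a^{-1}b}, \overline{a^{-1}c} \in G/H - \{\bar 1\}$, so by the definition of $\sim$ it is independent iff $\overline{a^{-1}b} \not\sim \overline{a^{-1}c}$, which is exactly the claimed criterion.

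The only genuinely matroid-theoretic step is the circuit-elimination argument for transitivity in (1); everything else is an application of $G$-invariance together with the coset computations. The recurring point requiring care, and the most likely source of error, is verifying the non-degeneracy conditions after each translation, namely that the translated cosets remain pairwise distinct and that the relevant ones stay different from $\bar 1$, so that the relation $\sim$ is actually defined on them.
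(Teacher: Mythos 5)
Your proof is correct and takes essentially the same approach as the paper: circuit elimination at the common element $\bar{b}$ for transitivity, and left translation by $a^{-1}$ (resp.\ $h$) plus the coset non-degeneracy checks for parts (2)--(4). The only cosmetic difference is nontriviality, which you establish directly by extending the independent set $\{\bar{1}\}$ to a basis, whereas the paper deduces it after part (4) from the existence of at least one basis; both arguments are sound.
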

\begin{proof}
(1): The relation $\sim$ is clearly reflexive and symmetric and the transitive law holds if two of the elements involved are equal.  Let $\bar{a}\sim \bar{b}$ and $\bar{b}\sim \bar{c}$ with $\bar{a}, \bar{b}, \bar{c}$ distinct elements not equal to $\bar{1}$.  Then $\{\bar{1}, \bar{a}, \bar{b}\}$ and $\{\bar{1}, \bar{b}, \bar{c}\}$ are dependent, and by the simplicity assumption, they are in fact circuits.  By circuit elimination, $\{\bar{1}, \bar{a}, \bar{c}\}$ is dependent so $\bar{a}\sim \bar{c}$.  This establishes transitivity.  We will show $\sim$ is nontrivial later after the proof of $(4)$.

(2): If $\bar{a}\sim \bar{b}$ and $\bar{a} \neq \bar{b}$, then $\{\bar{1}, \bar{a}, \bar{b}\}$ is dependent.  By $G$-invariance, $\{\overline{a^{-1}}, \bar{1}, \overline{a^{-1}b}\}$ is also dependent so $\overline{a^{-1}} \sim \overline{a^{-1}b}$ since $\bar{a}\neq \bar{b}$ so $\overline{a^{-1}b}\neq 1$.

(3): If $\bar{a}\sim\bar{b}$ but $\bar{a}\neq\bar{b}$, then $\{\bar{1}, \bar{a}, \bar{b}\}$ is dependent, and acting on it by $h$ tells us $\{\bar{1}, \overline{ha}, \overline{hb}\}$ is dependent so $\overline{ha} \sim \overline{hb}$.  On the other hand, if $\bar{a} = \bar{b}$, acting on both sides by $h$ (and using that $=$ implies $\sim$) yields $\overline{ha} \sim \overline{hb}$

(4): By $G$-invariance $\{\bar{a}, \bar{b}, \bar{c}\}$ is a basis if and only if $\{\bar{1}, \overline{a^{-1}b}, \overline{a^{-1}c}\}$ is.  This is equivalent to $\{\bar{1}, \overline{a^{-1}b}, \overline{a^{-1}c}\}$ not being dependent, which is equivalent to $\overline{a^{-1}b} \not\sim \overline{a^{-1}c}$. Now, if $\sim$ were trivial, then the last claim would imply there are no bases, a contradiction.
\end{proof}

\begin{rmk}
The second part of the above lemma is essentially the same as \cite[Proposition 4.1]{marcus2024tropical}, but the interpretation in terms of equivalence relations is new.
\end{rmk}


\begin{lem}\label{lemma: implying}
Let $G$ be a group and $H$ be a subgroup.  Let $\sim$ be a nontrivial equivalence relation on $G/H - \{\bar{1}\}$ such that if $\bar{a} \sim \bar{b}$ but $\bar{a}\neq \bar{b}$ then $\overline{a^{-1}} \sim \overline{a^{-1}b}$.  Then, that $\overline{g^{-1}a}\sim\overline{g^{-1}b}$ for all $g\not\in aH \cup bH$ implies $\bar{a}=\bar{b}$.
\end{lem}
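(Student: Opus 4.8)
The plan is to prove the contrapositive: assuming $\bar{a}\neq\bar{b}$, I will show that the hypothesis ``$\overline{g^{-1}a}\sim\overline{g^{-1}b}$ for all $g\notin aH\cup bH$'' forces $\sim$ to collapse to a single equivalence class, contradicting the standing nontriviality of $\sim$. The engine of the argument is the structural property assumed on $\sim$ (that $\bar{a}\sim\bar{b}$ with $\bar{a}\neq\bar{b}$ implies $\overline{a^{-1}}\sim\overline{a^{-1}b}$), which I will feed each equivalent pair produced by the hypothesis.

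First I would record two bookkeeping facts. When $\bar{a}\neq\bar{b}$, the cosets $\overline{g^{-1}a}$ and $\overline{g^{-1}b}$ are \emph{always} distinct, since $\overline{g^{-1}a}=\overline{g^{-1}b}$ is equivalent to $aH=bH$. Moreover $g\notin aH$ is exactly the condition $\overline{g^{-1}a}\neq\bar{1}$, and likewise $g\notin bH$ means $\overline{g^{-1}b}\neq\bar{1}$; hence for every $g\notin aH\cup bH$ the pair $\bigl(\overline{g^{-1}a},\overline{g^{-1}b}\bigr)$ is a genuine pair of distinct, $\sim$-equivalent elements of $G/H-\{\bar 1\}$, so the hypothesis of the structural property is met.

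Next, for each such $g$ I would apply the structural property to this pair. Taking $u=g^{-1}a$ as the representative of the first coset, it produces $\overline{u^{-1}}\sim\overline{u^{-1}(g^{-1}b)}$, and the crucial cancellation is $u^{-1}=a^{-1}g$ together with $u^{-1}(g^{-1}b)=a^{-1}gg^{-1}b=a^{-1}b$. Thus I obtain $\overline{a^{-1}g}\sim\overline{a^{-1}b}$ for all $g\notin aH\cup bH$. Writing $\bar{c}:=\overline{a^{-1}b}$, which is nontrivial precisely because $\bar{a}\neq\bar{b}$, this reads $\overline{a^{-1}g}\sim\bar{c}$.

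Finally I would let $g$ range. As $g$ runs over $G\setminus(aH\cup bH)$, the coset $\overline{a^{-1}g}$ attains every value $\bar{w}\in G/H$ except $\bar{1}$ (which corresponds to $g\in aH$) and $\bar{c}$ (which corresponds to $g\in bH$); conversely any such $\bar{w}$ is realized by $g=aw$. Hence every $\bar{w}\in(G/H-\{\bar 1\})\setminus\{\bar{c}\}$ satisfies $\bar{w}\sim\bar{c}$, and together with $\bar{c}\sim\bar{c}$ this makes all of $G/H-\{\bar 1\}$ one $\sim$-class, contradicting nontriviality; therefore $\bar{a}=\bar{b}$. The algebraic cancellation in applying the structural property is automatic, so the only point demanding care is this last coset-range step—verifying which $g$ are excluded and that $\overline{a^{-1}g}$ sweeps out exactly $(G/H-\{\bar 1\})\setminus\{\bar c\}$—since $G$ need not be abelian; I expect that modest verification to be the main obstacle.
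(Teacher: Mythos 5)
Your proof is correct and takes essentially the same route as the paper: the paper applies the structural property to each equivalent pair $\bigl(\overline{g^{-1}b},\overline{g^{-1}a}\bigr)$ to obtain $\overline{b^{-1}a}\sim\overline{b^{-1}g}$ (you do the mirror-image version with $a$ and $b$ swapped, getting $\overline{a^{-1}g}\sim\overline{a^{-1}b}$), and then lets $g$ range so that every element of $G/H-\{\bar{1}\}$ lands in a single equivalence class, contradicting nontriviality. Your careful accounting of which $g$ are excluded and the use of reflexivity to absorb the leftover coset is exactly the maneuver in the paper's proof.
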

\begin{proof}
Suppose $\overline{g^{-1}a}\sim\overline{g^{-1}b}$ for all $g\not\in aH \cup bH$ and $\bar{a} \neq \bar{b}$.  For any such $g$, we have $\overline{g^{-1}a}\neq\overline{g^{-1}b}$, and hence 
\begin{equation}\label{eq: eq9}
\overline{b^{-1}a} \sim \overline{b^{-1}g}
\end{equation}
for all $g\not\in aH \cup bH$. For $g \in aH$, we have $\overline{b^{-1}a} = \overline{b^{-1}g}$, and hence by reflexivity, \eqref{eq: eq9} also holds for $g\in aH$. Therefore, \eqref{eq: eq9} holds for all $g$ such that $\overline{b^{-1}g} \neq \bar{1}$. But this implies all elements are equivalent to $\overline{b^{-1}a}$, contradicting nontriviality of $\sim$. 
\end{proof}

\begin{pro}\label{proposition: simple rank 3 classification}
There is a one-to-one correspondence between simple rank $3$ $G$-invariant matroids whose ground $G$-set is $G / H$ and nontrivial equivalence relations $\sim$ on $G / H - \{\bar{1}\}$ satisfying the following two conditions.
\begin{enumerate}
    \item 
    If $\bar{a} \sim \bar{b}$ but $\bar{a}\neq \bar{b}$ then $\overline{a^{-1}} \sim \overline{a^{-1}b}$.
    \item 
    If $\bar{a}\sim \bar{b}$ and $h\in H$ then $\overline{ha} \sim \overline{hb}$.
\end{enumerate}
\end{pro}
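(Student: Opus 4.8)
The plan is to realize the bijection as a pair of mutually inverse maps. In one direction, Lemma~\ref{lemma: rank 3 necessary} already attaches to each simple rank $3$ $G$-invariant matroid $M$ on $G/H$ the relation $\sim$ obtained by declaring $\bar a \sim \bar b$ when $\bar a = \bar b$ or $\{\bar 1, \bar a, \bar b\}$ is dependent; parts (1)--(3) of that lemma show $\sim$ is a nontrivial equivalence relation satisfying conditions (1) and (2), so this assignment lands in the claimed target. I would first note that it is injective: by Lemma~\ref{lemma: rank 3 necessary}(4) the bases of $M$ are exactly the triples $\{\bar a, \bar b, \bar c\}$ of distinct cosets with $\overline{a^{-1}b} \not\sim \overline{a^{-1}c}$, and a matroid is determined by its bases, so $M$ is recovered from $\sim$.

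For surjectivity I would conversely build, from a nontrivial $\sim$ satisfying (1) and (2), a matroid $M_\sim$ whose declared bases are the triples of distinct cosets $\{\bar a, \bar b, \bar c\}$ with $\overline{a^{-1}b} \not\sim \overline{a^{-1}c}$. The first and most delicate step, which I expect to be the main obstacle, is to verify this is well-posed: independent of the coset representatives and invariant under permutation of the three cosets. Representative-independence reduces to the base point $\bar a$ and follows from condition (2) applied with $h \in H$ and its inverse, giving $\overline{a^{-1}b} \sim \overline{a^{-1}c} \Leftrightarrow \overline{h^{-1}a^{-1}b} \sim \overline{h^{-1}a^{-1}c}$. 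Invariance under swapping $\bar b, \bar c$ is symmetry of $\sim$; invariance under swapping $\bar a, \bar b$ is precisely condition (1): with $\bar x = \overline{a^{-1}b}$, $\bar y = \overline{a^{-1}c}$ (distinct since $\bar b \neq \bar c$), condition (1) converts $\bar x \sim \bar y$ into $\overline{x^{-1}} = \overline{b^{-1}a} \sim \overline{b^{-1}c} = \overline{x^{-1}y}$. These two transpositions generate $S_3$, so being a basis is a property of the unordered triple.

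Next I would check the matroid axioms. Nonemptiness of the basis family uses nontriviality: any $\bar u \not\sim \bar v$ yields the basis $\{\bar 1, \bar u, \bar v\}$. For exchange it is convenient to take the independent sets to be all subsets of size at most $2$ together with the basis triples, a family that is automatically hereditary; the only substantive instance of augmentation is enlarging an independent pair $I = \{\bar x, \bar y\}$ by an element of a basis $J$. I argue by contradiction: if every $\bar e \in J - I$ made $I \cup \{\bar e\}$ dependent, then, reading these dependencies through the equivalence relation $\sim_{\bar p}$ defined by $\bar u \sim_{\bar p} \bar v \Leftrightarrow \overline{p^{-1}u} \sim \overline{p^{-1}v}$ (a pullback of $\sim$, hence an equivalence relation, for any base point $\bar p$), transitivity forces the three points of $J$ into one $\sim_{\bar p}$-class, i.e.\ $J$ dependent --- contradicting that $J$ is a basis. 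The $S_3$-symmetry established above is exactly what permits the change of base point in this step. That every pair of distinct cosets is in fact independent (so $M_\sim$ is loopless and simple of rank $3$) is guaranteed by nontriviality of $\sim$, equivalently by Lemma~\ref{lemma: implying}.

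Finally I would verify $G$-invariance and that the two maps are inverse. $G$-invariance is immediate because the basis condition depends only on $\overline{a^{-1}b}$ and $\overline{a^{-1}c}$, which are unchanged under left translation $a,b,c \mapsto ga, gb, gc$, while left multiplication by $g$ permutes $G/H$ bijectively and preserves distinctness. For the round trip, the relation associated to $M_\sim$ sends $\bar a \sim' \bar b$ (for distinct $\bar a, \bar b \neq \bar 1$) to the statement that $\{\bar 1, \bar a, \bar b\}$ is dependent, which by the basis condition with base point $\bar 1$ is exactly $\bar a \sim \bar b$; hence $\sim' = \sim$. Combined with the injectivity from the first paragraph, this shows the two assignments are mutually inverse and completes the correspondence.
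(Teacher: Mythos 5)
Your proposal is correct, and its skeleton coincides with the paper's: the same forward map from Lemma~\ref{lemma: rank 3 necessary}, injectivity from its part (4), the same definition of bases for surjectivity, and the same two well-definedness checks (condition (2) for representative-independence, condition (1) for invariance under permutations of the triple). Where you genuinely diverge is in verifying that the declared bases define a matroid. The paper first uses Lemma~\ref{lemma: implying} to show every pair of distinct cosets lies in some basis, then takes dependent sets to be those contained in no basis and verifies circuit elimination; its only nontrivial case is two $3$-element circuits meeting in two elements, which is settled by transitivity of $\sim_{\bar b}$ at the common element $\bar b$, so no change of base point ever arises there. You instead declare all sets of size at most $2$ independent and verify the augmentation axiom by contradiction; this buys you independence from Lemma~\ref{lemma: implying} (that every pair extends to a basis falls out of augmentation applied to any single basis, whose existence is exactly nontriviality of $\sim$), at the cost of the one step you state tersely: when $I = \{\bar x, \bar y\}$ and the basis $J = \{\bar a, \bar b, \bar c\}$ are disjoint, placing the three points of $J$ in one $\sim_{\bar x}$-class is \emph{not} literally the dependence of $J$, since $\bar x \notin J$. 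One must, as you indicate, use the $S_3$-invariance to convert $\bar a \sim_{\bar x} \bar b$ and $\bar a \sim_{\bar x} \bar c$ into $\bar x \sim_{\bar a} \bar b$ and $\bar x \sim_{\bar a} \bar c$, and then apply transitivity of $\sim_{\bar a}$ (now with base point inside $J$) to conclude $\bar b \sim_{\bar a} \bar c$, i.e.\ that $J$ is dependent; likewise, in the case $|I \cap J| = 1$ the base point must be taken to be the common element. With those details written out, your argument is complete and slightly more self-contained than the paper's.
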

\begin{proof}
We have seen in Lemma \ref{lemma: rank 3 necessary} that given such a matroid, we can construct an equivalence relation with the desired properties.  This assignment is injective 
in view of Lemma~\ref{lemma: rank 3 necessary} (4). So we only need to prove surjectivity.

Let $\sim$ be a nontrivial equivalence relation on $G / H - \{\bar{1}\}$ satisfying the two conditions in the statement of the proposition. 
We define a basis to be a 3-element subset $\{\bar{a}, \bar{b}, \bar{c}\}\subseteq G/H$ such that $\overline{a^{-1}b} \not\sim \overline{a^{-1}c}$. 

We need to show that the 3-element sets satisfying the above property are indeed bases of a matroid. To show the notion of a basis is well-defined, there are two potential issues we need to rule out. First, we show that whether the equation $\overline{a^{-1}b} \sim \overline{a^{-1}c}$ holds does not depend on the choice of representatives. 
For this, we only need to show it does not depend on the choice of $a$, so let $a, a'$ be such that $\bar{a} = \bar{a'}$, that is, there exists $h\in H$ with $a' = ah$.  Then $a'^{-1}b = h^{-1}a^{-1}b$ and similarly, $a'^{-1}c = h^{-1}a^{-1}c$.  By condition (2) we have that $\overline{a^{-1}b} \sim \overline{a^{-1}c}$ implies $\overline{ha^{-1}b} \sim \overline{ha^{-1}c}$, or equivalently $\overline{a'^{-1}b} \sim \overline{a'^{-1}c}$. 

Secondly, we need to show that if a 3-element sets $\{\bar{a}, \bar{b}, \bar{c}\}\subseteq G/H$ satisfies the property $\overline{a^{-1}b} \not\sim \overline{a^{-1}c}$, it is still true after reordering the elements.
It is clear that whether $\overline{a^{-1}b} \sim \overline{a^{-1}c}$ holds is invariant under swapping $\bar{b}$ and $\bar{c}$.  Since two transpositions generate the permutation group $S_3$, we must show it is invariant under swapping $\bar{a}$ and $\bar{b}$.  That is we must show that $\overline{a^{-1}b} \sim \overline{a^{-1}c}$ implies $\overline{b^{-1}a} \sim \overline{b^{-1}c}$.  Write $x = a^{-1}b$ and $y = a^{-1}c$.  Then $x^{-1} = b^{-1}a$ and $x^{-1}y = b^{-1}c$.  Since $\bar{b} \neq \bar{c}$, we have $\bar{x} \neq \bar{y}$. Therefore, $\bar{x} \sim \bar{y}$ implies $\overline{x^{-1}} \sim \overline{x^{-1}y}$ by the property (1), and we obtain that the notion of basis is well-defined in the sense that checking the condition is independent of ordering of the elements in a basis. 

Next we claim that for any two distinct $\bar{a}, \bar{b} \in G/H - \{\bar{1}\}$, $\{\bar{a}, \bar{b}\}$ is contained in a basis.  By Lemma \ref{lemma: implying}, there is some $g$ such that $\overline{g^{-1}a}\not\sim \overline{g^{-1}b}$. Thus $\{\bar{g},\bar{a},\bar{b}\}$ is a basis, and hence this establishes the claim.  Furthermore, this implies that the matroid is simple (provided it is actually a matroid). 

We call a subset dependent if it is not contained in a basis, and define a circuit to be a minimal dependent set. We will show these are circuits of a matroid. First, by the claim in the previous paragraph, all circuits have size at least 3.  Clearly no circuit is contained in another circuit.  We must show that if $C_1$ and $C_2$ are distinct circuits with a common element $\bar{x}$, then $C_1 \cup C_2 - \{\bar{x}\}$ is dependent.

By construction all subsets of size at least $4$ are dependent, so the result holds if 
\[
|C_1 \cup C_2| - 1\geq 4.
\]
This is always true if $|C_1| \geq 4$ or $|C_2| \geq 4$.  Since any two circuits have at least $3$ elements, it remains to show that they have exactly $3$ elements. 
Note that $|C_1 \cup C_2| = 6 - |C_1 \cap C_2|$ so the result holds when $|C_1 \cap C_2| \leq 1$.  Since both sets are distinct, the remaining case is that they overlap in two elements.  So we may suppose $C_1 = \{\bar{x}, \bar{a}, \bar{b}\}$ and $C_2 = \{\bar{x}, \bar{b}, \bar{c}\}$.

Since $\{\bar{x}, \bar{a}, \bar{b}\}$ is not a basis, we must have that $\overline{b^{-1}a} \sim \overline{b^{-1}x}$.  Similarly, $\overline{b^{-1}c} \sim \overline{b^{-1}x}$, so $\overline{b^{-1}a} \sim \overline{b^{-1}c}$.  Thus $\{\bar{b}, \bar{a}, \bar{c}\} = C_1 \cup C_2 - \{\bar{x}\}$ is not a basis. Similarly, $\{\bar{x},\bar{a},\bar{c}\} = C_1\cup C_2 - \{\bar{b}\}$ is not a basis, which implies the circuit elimination property.

We have seen the matroid is simple and has rank $3$, so it remains to check $G$-invariance.  Suppose $\{\bar{a}, \bar{b}, \bar{c}\}$ is not a basis.  Then $\overline{a^{-1}b} \sim \overline{a^{-1}c}$.  For any $g\in G$, we have $\overline{(ga)^{-1}gb} \sim \overline{(ga)^{-1}gc}$.  Thus $\{\bar{ga}, \bar{gb}, \bar{gc}\}$ is not a basis.
\end{proof}

Of course in the case that the ground $G$-set is $G$, the last axiom in Proposition \ref{proposition: simple rank 3 classification} we require for $\sim$ is vacuous.  

\begin{rmk}\label{remark: equivalence class of inverse}
Property (1) in Proposition \ref{proposition: simple rank 3 classification} stating that if $\bar{a} \sim \bar{b}$ but $\bar{a}\neq \bar{b}$ then $\overline{a^{-1}} \sim \overline{a^{-1}b}$ can be shown to imply its own converse as follows.  If  $\overline{a^{-1}} \sim \overline{a^{-1}b}$ and $\bar{a}\neq \bar{b}$ then
\begin{equation}
\bar{a} = \overline{(a^{-1})^{-1}} \sim \overline{(a^{-1})^{-1}a^{-1}b} = \bar{b}.
\end{equation}
Thus we may replace property (1) with one involving a bidirectional implication.

As a consequence of this observation, we can characterize the equivalence class of $\overline{g^{-1}}$ in terms of that of $g^{-1}$.  Specifically this equivalence class is
\begin{equation}
\{\overline{g^{-1}}\} \cup \{\overline{g^{-1}h} \mid \bar{h}\sim \bar{g}, \bar{h}\neq\bar{g}\}.
\end{equation}
\end{rmk}

\subsubsection{Group actions on matroids of rank 3 with $G=\mathbb{Z}_p$}

To show the richness of simple rank $3$ $G$-invariant matroid structures with ground set $G$, we will briefly consider the simplest case of prime cyclic groups.  The following examples (such as Proposition \ref{proposition: cyclic}) hint that such matroids have a deep connection with number theory.

\begin{lem}\label{lemma: equivalence rel residue}
Let $p$ be a prime such that $p\equiv 7 \mathrm{(mod\ 12)}$.  Then there is a primitive 6th root of unity in $\mathbb{Z}_p$, which we call $u$. Define a relation $\sim$ on $\mathbb{Z}_p^\times$ as follows: for $x, y \in \mathbb{Z}_p^\times$
\[
x \sim y \iff \begin{cases}
    x=y, &\textrm{ if $x$ and $y$ are both quadratic residues or quadratic non-residues, } \\
    z=\pm w, & \textrm{if $x=z^2$ and $y=uw^2$, }\\
    z=\pm w & \textrm{if $x=uz^2$ and $y=w^2$. }
\end{cases}
\]
Then, $\sim$ is a nontrivial equivalence relation on $\mathbb{Z}_p^\times$. 
\end{lem}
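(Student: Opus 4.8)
The plan is to reinterpret $\sim$ as the partition of $\mathbb{Z}_p^\times$ into the unordered pairs $\{s, us\}$, after which every axiom becomes transparent. First I would record the arithmetic fact underlying the whole construction, namely that $u$ is a quadratic \emph{non}-residue modulo $p$. Since $p \equiv 7 \pmod{12}$ we have $6 \mid p-1$ (which is what guarantees the existence of the order-$6$ element $u$) and $(p-1)/2 \equiv 3 \pmod 6$; as $u$ has order $6$, Euler's criterion gives $u^{(p-1)/2} = u^{3} = -1 \neq 1$, so $u$ is a non-residue. Consequently multiplication by $u$ carries quadratic residues to non-residues and vice versa, and the map $x \mapsto ux$ has no fixed point. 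This is precisely what makes the three clauses in the definition mutually exclusive and jointly exhaustive: if $x$ is a residue then $ux$ is a non-residue, and every non-residue $y$ is uniquely of the form $uw^{2}$ with $w^{2} = y/u$ a residue (unique up to the sign of $w$).

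Next I would check that $\sim$ is well-defined, i.e. independent of the chosen square roots. A residue determines a square root only up to sign, so the condition ``$z = \pm w$'' in clauses (2)--(3) is unambiguous: it simply asserts $z^{2} = w^{2}$. With this in hand I would prove the reformulation
\[
x \sim y \quad\Longleftrightarrow\quad x = y \ \text{ or }\ \{x, y\} = \{s, us\} \text{ for some quadratic residue } s,
\]
by matching the clauses one by one. Clause (1) gives $x \sim y \iff x = y$ whenever $x, y$ lie in the same residue class, while clauses (2) and (3) say exactly that a residue $x = z^{2}$ and a non-residue $y = uw^{2}$ are related iff $z^{2} = w^{2}$, i.e. iff $y = ux$. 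Thus $\sim$ is the relation whose blocks are the two-element sets $\{s, us\}$ as $s$ ranges over the quadratic residues, each block containing exactly one residue and one non-residue.

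Finally, I would observe that the map $\iota$ sending a residue $s$ to $us$ and a non-residue $t$ to $u^{-1}t$ is a fixed-point-free involution of $\mathbb{Z}_p^\times$, and that the reformulated relation is just ``$y \in \{x, \iota(x)\}$''. Such a relation attached to a fixed-point-free involution is automatically reflexive, symmetric, and transitive, so $\sim$ is an equivalence relation; its classes are the $(p-1)/2 \geq 3$ blocks, so it is nontrivial. I therefore expect the main obstacle to lie entirely in the preliminary step, namely verifying that $u$ is a non-residue and unwinding the square-root bookkeeping of clauses (2)--(3) into the clean block description $\{s, us\}$. Once that translation is made, transitivity — which is normally the delicate point when checking that a relation is an equivalence relation — comes for free from the involution structure.
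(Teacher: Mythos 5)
Your proof is correct, and it takes a recognizably different route from the paper's at two points. To show that $u$ is a non-residue, the paper argues from the nonexistence of a primitive 12th root of unity (since $12 \nmid p-1$, a square root of $u$ would have order $12$, which is impossible), whereas you use Euler's criterion, computing $u^{(p-1)/2} = u^{3} = -1$ from $(p-1)/2 \equiv 3 \pmod{6}$; both are elementary and valid. More substantively, the paper verifies transitivity by direct case analysis (residue/non-residue/residue, then non-residue/residue/non-residue), chasing square roots via $\alpha = \pm\beta = \pm\gamma$, while you first establish the block description: the classes of $\sim$ are exactly the pairs $\{s, us\}$ with $s$ a quadratic residue, i.e.\ the orbits of your involution $\iota$. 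Once that translation is made, all three axioms are automatic, since the orbit relation of any involution is an equivalence relation (note that fixed-point-freeness is not actually needed for this; it only guarantees that every block has size exactly two), and nontriviality follows from your count of $(p-1)/2 \geq 3$ classes, which is sharper than the paper's observation that $p>3$ gives at least two residues lying in distinct classes. The computational core is identical --- your statement that a residue $x$ and a non-residue $y$ are related iff $y = ux$ is precisely the paper's condition $z = \pm w$ unwound --- but your packaging makes the class structure explicit, which is in fact the structure exploited later in the paper (e.g.\ in Proposition \ref{proposition: cyclic}, where the dependent triples are exactly the translates of $\{0, \alpha^2, u\alpha^2\}$), so nothing is lost and some later work is anticipated.
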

\begin{proof}
Observe that $6 \mid (p - 1)$ so there is a primitive 6th root of unity, which we call $u$.  
Since $12 \nmid (p - 1)$, there is no primitive 12th root of unity, so $u$ cannot be a residue, i.e., there is no $x \in \mathbb{Z}_p$ such that $x^2 \equiv u$ (mod $p$).

Next, $\sim$ is clearly reflexive and symmetric. To show that $\sim$ is transitive, suppose that $x \sim y$ and $y\sim z$. If $x$ and $y$ are both residues or both non-residues, then there is nothing to prove as $x=y$. So, let's first assume that $x$ is a residue, $y$ is a non-residue, and $z$ is a residue. So, we have
 \[
 x=\alpha^2, \quad y=u\beta^2, \quad z=\gamma^2\quad  \textrm{ with } \alpha=\pm \beta = \pm \gamma,
 \]
showing that $x=z$, and hence $x \sim z$.

The remaining case is when $x,z$ are non-residues and $y$ is a residue. One can apply a similar argument as above to prove this case. 

Finally, $\sim$ is nontrivial since $p > 3$ implies that there are at least $2$ residues.
\end{proof}

\begin{lem}\label{lemma: sum of squares}
Let $p$ be a prime with $p\equiv 7 \mathrm{(mod\ 12)}$. Let $\sim$ be the equivalence relation in Lemma \ref{lemma: equivalence rel residue} with a fixed primitive 6th root of unity $u$. Then, for $x,y \in \mathbb{Z}_p^\times$, there exist $a,b \in \mathbb{Z}_p^\times$ such that $(x-y)=(a^2-b^2)$.
\end{lem}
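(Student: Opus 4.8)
The plan is to observe that the statement has nothing to do with the equivalence relation $\sim$ or the sixth root of unity $u$: since $x-y$ ranges over all of $\mathbb{Z}_p$ as $x,y$ range over $\mathbb{Z}_p^\times$, the content of the lemma is simply that \emph{every} element $d\in\mathbb{Z}_p$ can be written as $a^2-b^2$ with $a,b\in\mathbb{Z}_p^\times$. So I would first reduce to proving this, setting $d=x-y$.

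The case $d=0$ is immediate by taking $a=b=1$ (which are nonzero since $p>2$). For $d\neq 0$, I would exploit the factorization $a^2-b^2=(a-b)(a+b)$ directly. Using that $2$ is invertible in $\mathbb{Z}_p$, for any $s\in\mathbb{Z}_p^\times$ set $t=ds^{-1}$ and $a=2^{-1}(s+t)$, $b=2^{-1}(t-s)$. Then $a-b=s$ and $a+b=t$, so $a^2-b^2=st=d$ regardless of the choice of $s$. This produces a one-parameter family of representations of $d$ as a difference of squares, and the only remaining task is to choose $s$ so that both $a$ and $b$ are nonzero.

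The nonzero conditions are easy to read off: $a=0$ exactly when $s+t=0$, i.e.\ $s^2=-d$, and $b=0$ exactly when $t-s=0$, i.e.\ $s^2=d$. Each of the equations $s^2=d$ and $s^2=-d$ has at most two solutions, so at most four values of $s\in\mathbb{Z}_p^\times$ are forbidden. Since $p\equiv 7\pmod{12}$ forces $p\geq 7$ and hence $|\mathbb{Z}_p^\times|=p-1\geq 6>4$, a permissible $s$ exists, completing the argument. (One can even sharpen the count: because $p\equiv 3\pmod 4$, the element $-1$ is a non-residue, so $d$ and $-d$ cannot both be squares and at most two values of $s$ are excluded.)

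The main point to be careful about is precisely the requirement $a,b\in\mathbb{Z}_p^\times$ rather than merely $a,b\in\mathbb{Z}_p$. Without it the lemma collapses to the trivial identity $d=(2^{-1}(d+1))^2-(2^{-1}(d-1))^2$; it is exactly the insistence that $a$ and $b$ be nonzero that forces a (very mild) lower bound on $p$, which the hypothesis $p\equiv 7\pmod{12}$ supplies with room to spare. I do not anticipate any genuine obstacle beyond this bookkeeping.
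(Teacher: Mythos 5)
Your proof is correct and is essentially the paper's own argument: under the substitution $s = v^{-1}$, your formulas $a = 2^{-1}(s+t)$, $b = 2^{-1}(t-s)$ with $t = (x-y)s^{-1}$ are exactly the paper's $a = (v(x-y)+v^{-1})/2$, $b = (v(x-y)-v^{-1})/2$, and both proofs finish by observing that the failure condition $(x-y) = \pm s^{2}$ rules out too few parameter choices given $p \equiv 7 \ (\mathrm{mod}\ 12)$ (the paper phrases this as ``at most two quadratic residues, hence $p \le 5$,'' you phrase it as at most four bad values of $s$ out of $p-1 \ge 6$). The only cosmetic differences are your separate handling of the case $x = y$, which the paper's uniform construction covers automatically, and your sharpened count using that $-1$ is a non-residue.
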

\begin{proof}
 Setting $a = (vx - vy + v^{-1}) / 2$ and $b = (vx - vy - v^{-1}) / 2$ for any $v^{-1}\in \mathbb{Z}_p^\times$ will work unless $a$ or $b$ is zero.  In the latter case, $(x - y) = \pm v^{-2}$.  Choosing a different unit $w$ shows that if the claim is false then $\pm v^{-2} = (x - y) = \pm w^{-2}$, which implies there are at most two quadratic residues, implying $p \leq 5$.  But this contradicts $p\equiv 7 \mathrm{(mod\ 12)}$.  This proves the lemma. 
\end{proof}

\begin{pro}\label{proposition: cyclic}
Let $p$ be a prime with $p\equiv 7 \mathrm{(mod\ 12)}$. For each primitive 6th root of unity $u$, there is a simple $\mathbb{Z}_p$-invariant rank 3 matroid structure on $\mathbb{Z}_p$ such that a 3-element subset is a basis unless it is a translate (by addition) of $\{0, \alpha^2, u\alpha^2\}$ for some $\alpha\in \mathbb{Z}_p^\times$, i.e., $\{x,x+\alpha^2,x+u\alpha^2\}$.
\end{pro}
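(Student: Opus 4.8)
The plan is to obtain this matroid directly from Proposition~\ref{proposition: simple rank 3 classification}, feeding in the equivalence relation of Lemma~\ref{lemma: equivalence rel residue}. I would take $G = \mathbb{Z}_p$ written additively and $H$ the trivial subgroup, so that $G/H = \mathbb{Z}_p$, the distinguished coset $\bar{1}$ is $0$, and $G/H - \{\bar{1}\} = \mathbb{Z}_p^\times$. Because $H$ is trivial, condition (2) of Proposition~\ref{proposition: simple rank 3 classification} holds vacuously, so it remains only to produce a nontrivial equivalence relation on $\mathbb{Z}_p^\times$ satisfying condition (1). For $\sim$ I would take exactly the relation of Lemma~\ref{lemma: equivalence rel residue}, already shown there to be a nontrivial equivalence relation; in additive notation condition (1) becomes the implication: if $x \sim y$ and $x \neq y$, then $-x \sim y - x$.

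Before verifying this I would record two facts about a primitive sixth root of unity $u$, coming from $\Phi_6(u) = u^2 - u + 1 = 0$: namely $u - 1 = u^2$ and $u^3 = -1$. I would also use that $-1$ is a nonresidue (since $p \equiv 3 \pmod 4$) and that $u$ is a nonresidue (as in Lemma~\ref{lemma: equivalence rel residue}). Unwinding the definition of $\sim$, the nontrivial equivalence classes are precisely the pairs $\{t, ut\}$ with $t$ a quadratic residue, so $x \sim y$ with $x \neq y$ means $\{x, y\} = \{\alpha^2, u\alpha^2\}$ for some $\alpha \in \mathbb{Z}_p^\times$.

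The main step is the verification of condition (1), carried out in the two cases fixing which of $x, y$ is the residue. If $x = \alpha^2$ and $y = u\alpha^2$, then $y - x = (u - 1)\alpha^2 = u^2\alpha^2$ is a residue while $-x = -\alpha^2 = u^3\alpha^2 = u\,(y - x)$ is a nonresidue, so $\{-x, y - x\} = \{y - x, u(y - x)\}$ is a class and $-x \sim y - x$. If instead $x = u\alpha^2$ and $y = \alpha^2$, then $-x = -u\alpha^2$ is a residue while $y - x = (1 - u)\alpha^2 = -u^2\alpha^2 = u(-x)$ is a nonresidue, and again $\{-x, y - x\} = \{-x, u(-x)\}$ is a class, so $-x \sim y - x$. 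With condition (1) in hand, Proposition~\ref{proposition: simple rank 3 classification} produces a simple rank $3$ $\mathbb{Z}_p$-invariant matroid on $\mathbb{Z}_p$.

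Finally I would identify the bases. By Proposition~\ref{proposition: simple rank 3 classification}, a triple $\{a, b, c\}$ is a non-basis exactly when $b - a \sim c - a$; since $b \neq c$ forces $b - a \neq c - a$, this occurs iff $\{b - a, c - a\} = \{\alpha^2, u\alpha^2\}$, i.e.\ iff $\{a, b, c\} = \{a, a + \alpha^2, a + u\alpha^2\}$ is a translate of $\{0, \alpha^2, u\alpha^2\}$, which is the asserted description. I expect the only delicate point to be the bookkeeping in condition (1): one must track how the residue/nonresidue type of an element changes on passing to the additive difference $y - x$ and confirm that the identities $u - 1 = u^2$ and $u^3 = -1$ align the partners of a class correctly. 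Lemma~\ref{lemma: sum of squares}, guaranteeing that differences of units are differences of squares, is the natural tool should one instead match this construction against the distinct difference system framework of Theorem~B, but the direct route above does not seem to require it.
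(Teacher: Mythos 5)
Your proposal is correct and takes essentially the same route as the paper: both feed the equivalence relation of Lemma~\ref{lemma: equivalence rel residue} into Proposition~\ref{proposition: simple rank 3 classification} with $H$ trivial (so condition (2) is vacuous), verify condition (1) by the same two-case residue/nonresidue computation using $u-1=u^2$ and $u^3=-1$, and identify the non-bases as the translates of $\{0,\alpha^2,u\alpha^2\}$. Your explicit packaging of the nontrivial equivalence classes as the pairs $\{t, ut\}$ with $t$ a quadratic residue is a slightly tidier way of organizing the same verification, and you are also right that Lemma~\ref{lemma: sum of squares} is not needed for this argument.
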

\begin{proof}

We prove that the equivalence relation $\sim$ in Lemma \ref{lemma: equivalence rel residue} satisfies the conditions in Proposition \ref{proposition: simple rank 3 classification} with $H=\{0\}$, and hence it defines a $\mathbb{Z}_p$-invariant matroid $M$ on $\mathbb{Z}_p$. As $H = 0$, the second condition of Proposition \ref{proposition: simple rank 3 classification} is vacuous.

Before we prove the first condition, we first prove that these sets are indeed bases. Recall from the proof of Proposition \ref{proposition: simple rank 3 classification} that $\{x,y,z\} \subseteq \mathbb{Z}_p^\times$ is a basis if and only if $(y-x) \not \sim (z-x)$. Now, since $|\{x,y,z\}|=3$, $(y-x)$ and $(z-x)$ can not be both residues or both non-residues. We may assume that $y-x= \alpha^2$ and $z-x= u\alpha^2$. It follows that $\{x,y,z\}=\{x,x+\alpha^2,x+u\alpha^2\}$, showing these sets are indeed bases of a matroid (provided it is actually a matroid). 


Now, we prove the first property of Proposition \ref{proposition: simple rank 3 classification}. 
Let $x,y\in\mathbb{Z}_p^\times$ be such that $x \sim y$ and $x\neq y$.  We need to show that 
\[
(-x) \sim (y - x).
\]
Since $x$ and $y$ are equivalent but not equal, one is a residue and the other is a non-residue.  We will make frequent use of the fact that $(-1)$ is a non-residue since $p\equiv 3 \mathrm{(mod\ 4)}$. Also, we will use the fact that $u^2-u+1=0$. 

We first consider the case that $x$ is the residue, so that $x\sim y$ implies that $x=z^2$, $y=uw^2$, and $z=\pm w$. Since $(u-1)=(-1)u^{-1}$ is the product of two non-residues, it is a residue. Hence, 
\[
(y-x)=uw^2-w^2=(u-1)w^2=(u-1)x
\]
is a residue. Moreover, by definition of $\sim$, we have $(y-x) \sim u(y-x)$. Therefore, we have
\[
(y-x) \sim u(y-x) = u(u-1)x = (-x).
\]
Next we consider the case that $y$ is the residue, so that $x\sim y$ implies that $x=uw^2$, $y = z^2$, and $w^2=z^2$.   
Then $(-x)$ is a residue.  Moreover, with $y=u^{-1}x$, we have
\[
(y - x) = (u^{-1} - 1)x = (-ux) \sim (-x), 
\]
showing that $(y - x) \sim (-x)$.
\end{proof}

The equivalence classes of the relation corresponding to a simple rank $3$ $\mathbb{Z}_p$-invariant matroid structure on $\mathbb{Z}_p$ have strong additive combinatorial properties.  Specifically, they are difference-free and differences between pairs of elements are distinct. To prove this, we use the following lemma.

\begin{lem}\label{lemma: first claim}
Let $G$ be a finite abelian group and $\sim$ be a nontrivial equivalence relation on $G - \{0\}$ such that for $x \neq y$, the equivalence $x\sim y$ implies the following equivalence:
\begin{equation}\label{eq: equi rel}
(y - x) \sim (-x).
\end{equation}
Suppose there is some $z$ with $z\sim (-z)$, and denote the equivalence class by $[z]$.  Then $[z] \cup \{0\}$ is a proper subgroup.
\end{lem}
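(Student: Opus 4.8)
The plan is to set $K = [z] \cup \{0\}$ and prove it is a subgroup via the subgroup criterion: $K$ is nonempty and closed under subtraction. Properness is then immediate, since $\sim$ being nontrivial means $[z]$ does not exhaust $G - \{0\}$, so some nonzero element lies outside $K$. The real content is an intermediate \emph{symmetry} statement: $-w \in [z]$ whenever $w \in [z]$, i.e.\ $-K = K$. I expect this to be the main obstacle, because the defining implication \eqref{eq: equi rel} is not visibly invariant under $x \mapsto -x$ (indeed the relation of Lemma~\ref{lemma: equivalence rel residue} is not negation-invariant), so symmetry has to be squeezed out of the single hypothesis $z \sim -z$.

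Here is how I would prove symmetry. Throughout I use only the forward implication \eqref{eq: equi rel}, applied in the convenient form: if $a \sim b$ with $a \neq b$, then $(a - b) \sim (-b)$ (take $x = b$, $y = a$). Fix $w \in [z]$; the cases $w = z$ and $w = -z$ give $-w \in [z]$ at once from $z \sim -z$, so assume $w \neq z, -z$. Transitivity gives $w \sim z$ and $w \sim -z$, and applying the implication to $w \sim -z$ yields $(w + z) \sim z$; in particular $w + z \in [z] \subseteq K$ (note $w + z \neq 0$ since $w \neq -z$). Now suppose for contradiction that $-w \notin [z]$. Then the class $T := [-w]$ is disjoint from $[z]$, and $0 \notin T$, so $T \cap K = \emptyset$. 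On the other hand, for each $a \in [z]$ with $a \neq w$ the implication applied to $a \sim w$ gives $(a - w) \sim (-w)$, i.e.\ $a - w \in T$; together with $0 - w = -w \in T$ this shows $(K - w) \setminus \{0\} \subseteq T$. Since $T$ is disjoint from $K$, we conclude $(K - w) \cap K = \{0\}$. But $z \in K$ and $z + w \in K$, so $z \in (K - w) \cap K$, forcing $z = 0$, a contradiction. Hence $-w \in [z]$, establishing $-K = K$.

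With symmetry in hand, closure under subtraction is a short case check. For $a, b \in [z]$ with $a \neq b$, the implication gives $(a - b) \sim (-b)$, and symmetry gives $-b \in [z]$, hence $a - b \sim -b \sim z$ and so $a - b \in [z] \subseteq K$; the cases $a = b$, or one of $a, b$ equal to $0$, are trivial using $0 \in K$ together with symmetry. Thus $K$ is nonempty and closed under subtraction, so it is a subgroup, and it is proper because $\sim$ is nontrivial.

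Two points to watch. The degenerate possibility $2z = 0$ (where $z = -z$, so that $z \sim -z$ is merely reflexivity) is harmless: the computations above go through verbatim after observing that $w + z = w - z$ in this case, so the argument need not treat it separately beyond a remark. Finally, although Remark~\ref{remark: equivalence class of inverse} notes that \eqref{eq: equi rel} implies its own converse, the proof above deliberately uses only the stated forward implication, which keeps it self-contained.
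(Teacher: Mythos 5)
Your proof is correct, and it takes a genuinely different route from the paper's. The paper proceeds in three stages: an induction showing $z \sim (-kz)$ for $k=1,\ldots,t-2$ (where $t$ is the order of $z$), hence $\langle z\rangle \subseteq [z]\cup\{0\}$; closure under negation, by a direct double application of \eqref{eq: equi rel}; and closure under addition by cases, where the inductive stage is invoked both in the negation step (to dispose of $x \in \langle z\rangle$) and, essentially, for the doubling case $x + x \in \langle x\rangle$. Your use of the subtraction criterion for subgroups makes that induction on multiples of $z$ unnecessary, since the diagonal case $a - a = 0$ is vacuous: that is a genuine streamlining. On the other hand, your symmetry step is more elaborate than it needs to be. Once you have $z + w \in [z]$ and the inclusion $(K - w)\setminus\{0\} \subseteq [-w]$ (with $K = [z]\cup\{0\}$), the single element $z = (z+w) - w$ of the left-hand side already yields $z \sim (-w)$; equivalently, applying \eqref{eq: equi rel} to $(z+w) \sim w$ (legitimate since $z + w \neq w$) gives $z \sim (-w)$ outright, with no need for the contradiction hypothesis, the class $T$, or the analysis of $(K - w)\cap K$. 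That two-line computation is precisely the paper's negation-closure argument, so your contradiction scaffolding is what the paper's direct step becomes when dressed up. In short, your approach buys freedom from the order-of-$z$ induction, while the paper's negation step is the streamlined core of your symmetry argument; both proofs are complete, and your treatment of the degenerate case $2z = 0$ and of properness agrees with the paper's.
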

\begin{proof}
Let $t$ be the order of $z$. First we show by induction that $z \sim (-kz)$ for $k=1,\ldots, t - 2$. The base case holds by assumption, so assume $z \sim (-kz)$, for some $k \geq 1$. Since the order of $z$ is $t$, we have that $z \neq -kz$ for $k \in \{1,\dots,t-2\}$. So by applying relation~\eqref{eq: equi rel}, we have $(-kz - z) \sim (-z)$, which implies the claim for $k + 1$.  In particular the cyclic group generated by $z$ is contained in $[z] \cup \{ 0 \}$.

Next, we prove that $[z]\cup \{0\}$ is closed under negation. Suppose that $x \sim z$. If $x \in \angles{z}$, then there is nothing to prove. So, we may assume that $x \not \in \angles{z}$. Then, since $x\sim z\sim (-z)$, we get
\begin{equation}\label{eq: nec1}
(x + z) \sim z \sim x,
\end{equation}
which yields
\begin{equation}\label{eq: nec2}
z = (x + z) - x\sim (-x),
\end{equation}
showing that $[z]\cup \{0\}$ is closed under negation. 

It remains to show that $[z] \cup \{0\}$ is closed under addition. Note that since any element of the equivalence class satisfies $x\sim (-x)$, all of the above holds with $x$ in place of $z$. Now, suppose that $x,y \in [z] \cup \{0\}$. We may assume that $x,y \neq 0$. If $x=y$, then, from the first paragraph, we have
\[
(x+y)=2x \in \angles{x} \subseteq [z]\cup\{0\}.
\]
If $x = -y$, then $x+y=0 \in [z]\cup\{0\}$. Finally, if $x \neq \pm y$, then we may apply the same argument as in the second paragraph (with $y$ instead of $z$) to obtain
\[
(x+y) \sim y \sim x, 
\]
showing that $(x+y)\sim z$, or $(x+y) \in [z]\cup\{0\}$.

Thus the equivalence class (together with $0$) is closed under addition by any element of the equivalence class.  Since it is closed under addition and negation, it is a subgroup. The subgroup $[z]\cup \{0\}$ is proper since otherwise the equivalence relation would be trivial.
\end{proof}

\begin{pro}\label{proposition: difference pro}
Let $p$ be an odd prime.  Let $\sim$ be a nontrivial equivalence relation on $\mathbb{Z}_p^\times$ such that for $x \neq y$, the equivalence $x\sim y$ implies the following equivalence:
\[
(y - x) \sim (-x).
\]
Then the following hold. 
\begin{enumerate}
    \item 
Let $x\in\mathbb{Z}_p^\times$.  Then $x\not\sim (-x)$.
    \item 
Let $x, y$ be distinct elements of some equivalence class.  Then $(y - x)$ does not belong to the equivalence class.
    \item 
Let $x,y\in\mathbb{Z}_p^\times$ be such that $x \sim y$ but $x\neq y$.  Then $(-x)\not\sim (-y)$.
    \item 
Let $x, y$ and $z, w$ be two pairs of distinct elements of some equivalence class.  Then if $x - y = z - w$, then $x = z$ and $y = w$.
\end{enumerate}
\end{pro}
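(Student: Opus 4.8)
The plan is to establish the four statements in order, using (1) as the engine and deriving (2)--(4) from it by short chains of the defining implication ``$x\sim y,\ x\neq y \Rightarrow (y-x)\sim(-x)$'' together with transitivity. For (1), I would argue by contradiction: if $x\sim(-x)$ for some $x\in\mathbb{Z}_p^\times$, then Lemma~\ref{lemma: first claim} (applied with $z=x$) shows $[x]\cup\{0\}$ is a \emph{proper} subgroup of $\mathbb{Z}_p$. Since $p$ is prime, the only proper subgroup is $\{0\}$, which cannot contain the nonzero element $x$; this contradiction proves $x\not\sim(-x)$ for every $x$.

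For (2), take distinct $x\sim y$, so that $y-x\in\mathbb{Z}_p^\times$ and the defining implication gives $(y-x)\sim(-x)$. If $(y-x)$ also lay in the class $[x]=[y]$, then $(y-x)\sim x$, and transitivity would force $x\sim(-x)$, contradicting (1). For (3), suppose $x\sim y$ with $x\neq y$ and, for contradiction, $(-x)\sim(-y)$. Applying the defining implication to the pair $(y,x)$ yields $(x-y)\sim(-y)$, and applying it to the pair $(-x,-y)$ (legitimate since $-x\neq -y$) yields $(x-y)\sim x$. Transitivity then gives $x\sim(-y)$, and combining with $x\sim y$ produces $y\sim(-y)$, which again contradicts (1).

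Statement (4) follows quickly from (3). Given two pairs $x,y$ and $z,w$ of distinct elements of one class with $x-y=z-w$, note that $y-x=w-z$, so the defining implication gives $(-x)\sim(y-x)=(w-z)\sim(-z)$, whence $(-x)\sim(-z)$. Since $x$ and $z$ lie in the same class we have $x\sim z$, so if $x\neq z$ then (3) would force $(-x)\not\sim(-z)$, a contradiction; therefore $x=z$, and then $x-y=z-w$ forces $y=w$. The one genuinely load-bearing step is (1): everything else is routine transitivity bookkeeping, and the real content is recognizing that Lemma~\ref{lemma: first claim} applies verbatim and that primality of $p$ eliminates any intermediate subgroup. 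The only care needed in (2)--(4) is to apply the defining implication to the correct ordered pair (using symmetry of $\sim$ to feed it the pair $(y,x)$, and the substitution $y-x=w-z$ in~(4)).
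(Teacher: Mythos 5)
Your proof is correct and takes essentially the same approach as the paper: part (1) from Lemma~\ref{lemma: first claim} plus primality of $p$, and parts (2)--(4) by chaining the defining implication with transitivity and part (1). The only (immaterial) difference is in (3), where the paper contradicts (1) applied to the element $x-y$ (via $(x-y)\not\sim(y-x)$) while you apply the defining implication to the pair $(-x,-y)$ and derive $y\sim(-y)$; both are valid one-line variations of the same idea.
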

\begin{proof}
The first claim follows by Lemma \ref{lemma: first claim}, since $x$ generates $\mathbb{Z}_p$.

Next, suppose $x,y$ satisfy $x\sim y$ and $x\neq y$.  We have
\[
(y - x) \sim (-x) \not\sim x.
\]
So $(y - x)\not\in [x]$, the equivalence class of $x$.  We also have
\[
(-y) \sim (x - y) \not\sim (y - x) \sim (-x).
\]
Note that $(x-y) \not \sim (y-x)$ from (1). This shows (2) and (3). 

For (4), suppose $x, y$ and $z, w$ are two pairs of distinct elements of some equivalence class.  We have
\[
(-x) \sim (y - x)=(w-z) \sim (-z). 
\]
From (3), one concludes that $x = z$ and $y=w$.  
\end{proof}

If we do not assume that $p$ is prime, then the proof of Proposition \ref{proposition: difference pro} shows that the only obstruction to the result holding is that there may be an element satisfying $x\sim (-x)$ (in which case none of the claims of the proposition needs to hold).

In order to produce an example where there is some $x$ with $x\sim (-x)$, we consider the following construction.

\begin{pro}
Let $X$ be a finite set and $\equiv$ be an equivalence relation and $k$ be a natural number. Consider
\[
\mathcal{B}=\{B \subseteq X \mid \textrm{$|B|=k$ and not all $k$ elements in $B$ are equivalent}\}. 
\]
Suppose that $\mathcal{B} \neq \emptyset$. Then $(X,\mathcal{B})$ is a matroid. 
\end{pro}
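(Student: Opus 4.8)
The plan is to verify the basis exchange axiom directly, since the two remaining axioms are immediate: every member of $\mathcal{B}$ has size $k$ by definition, and $\mathcal{B}\neq\emptyset$ by hypothesis. Write the equivalence classes of $\equiv$ as $C_1,\dots,C_m$. Then a $k$-subset $B\subseteq X$ fails to be a basis exactly when it is contained in a single class $C_i$, so that $B\in\mathcal{B}$ precisely when $B$ meets at least two distinct classes. I would record this reformulation first, as the entire argument is phrased in terms of which classes a set meets.

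Next I would fix two bases $B_1,B_2\in\mathcal{B}$ and an element $x\in B_1\setminus B_2$, and seek $y\in B_2\setminus B_1$ with $(B_1\setminus\{x\})\cup\{y\}\in\mathcal{B}$; note that $B_2\setminus B_1\neq\emptyset$ since $|B_1|=|B_2|=k$. The argument splits according to how many classes $B_1\setminus\{x\}$ meets. If $B_1\setminus\{x\}$ already meets two distinct classes, then any $y\in B_2\setminus B_1$ works, because $(B_1\setminus\{x\})\cup\{y\}$ still meets those two classes and has size $k$.

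The remaining case, which I expect to be the crux, is when $B_1\setminus\{x\}$ lies entirely inside one class $C$. Since $B_1$ is a basis it meets at least two classes, so $x\notin C$ and in fact $x$ is the \emph{unique} element of $B_1$ outside $C$. Because $B_2$ is a basis, it contains some element $y\notin C$; and since $x$ is the only element of $B_1$ lying outside $C$ while $x\notin B_2$, any such $y$ must lie in $B_2\setminus B_1$. Then $(B_1\setminus\{x\})\cup\{y\}$ contains $B_1\setminus\{x\}\subseteq C$ together with $y\notin C$, so it meets two classes and is a basis, completing the exchange step.

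I expect the main obstacle to be handling this last case cleanly: the subtlety is arguing that the element $y\notin C$ furnished by $B_2$ is automatically available in $B_2\setminus B_1$, which rests precisely on the observation that $x$ is the unique element of $B_1$ outside $C$ and that $x\notin B_2$. A minor point to dispatch at the start is the degenerate small-$k$ behaviour: for $k\leq 1$ a one-element (or empty) set is vacuously ``all equivalent,'' so $\mathcal{B}=\emptyset$, and hence the hypothesis $\mathcal{B}\neq\emptyset$ forces $k\geq 2$, $|X|\geq k$, and the existence of at least two classes—exactly the facts the exchange argument relies on.
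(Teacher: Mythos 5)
Your proof is correct and follows essentially the same route as the paper's: both verify only the basis exchange axiom and split into the same two cases according to whether $B_1\setminus\{x\}$ lies in a single equivalence class, choosing in the crucial case an element of $B_2$ inequivalent to the elements of $B_1\setminus\{x\}$. Your version merely adds some bookkeeping the paper leaves implicit (the degenerate $k\leq 1$ observation and the explicit check that the chosen $y$ lies in $B_2\setminus B_1$).
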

\begin{proof}
We only have to check the bases exchange axiom. Let $B_1=\{x_1,\dots , x_k\}$ and $B_2=\{y_1,\dots, y_k\}$ be bases. We may assume the element we are exchanging from $B_1$ is $x_k$. 

First, suppose that $x_i \sim x_j$ for all $i,j \in \{1,\dots, k-1\}$. Since $\{y_1, ..., y_k\}$ are not all equivalent, there is some element $y_i$ that is not equivalent to $x_1$. In particular, $y_i \neq x_j$ for $j \in \{1,\dots,k-1\}$ and $y_i \not \sim x_1$, and hence $\{x_1,\dots,x_{k-1},y_i\}$ is a basis.  

Now, suppose that $\{x_1, ..., x_{k - 1}\}$ are not all equivalent. Take any $y_i \in B_2 \backslash B_1$. Then $\{x_1, ..., x_{k - 1}, y_i\}$ is a basis.
\end{proof}

We specialize this to the case $X = \mathbb{Z}_n$, $\equiv$ is congruence modulo $d$ for some $d\mid n$.  Then the resulting matroid corresponds to the equivalence relation $\sim$ where $x\sim y$ means $x = y$ or $d | \mathrm{gcd}(x,y)$.  We see that $d\sim (-d)$. Here are some specific examples. 

\begin{myeg}
Let $X=\mathbb{Z}_4$ and $k=d=2$. Then we have
\[
1 \equiv 3 \textrm{ and }  0 \equiv 2. 
\]
So, we have 
$\mathcal{B}=\{\{1, 0\},\{1,2\},\{3, 0\}, \{3, 2\}\}$
So, $(X, \mathcal{B})=U_{1,2} \oplus U_{1,2}$. 

For $k=3$, we have
\[
\mathcal{B'}=\{\{1,2,3\}, \{0, 1, 3\}, \{0,1,2\}, \{0,1,3\}\} 
= U_{3,4}.
\]
\end{myeg}

The following example shows that our construction is different from partition matroids. 

\begin{myeg}
Let $X=\mathbb{Z}_{9}$, $k=3$, and $d=3$. Then, we have
\[
0 \equiv 3 \equiv 6, \quad 1 \equiv 4 \equiv 7, \quad 2 \equiv 5  \equiv 8.
\]
We claim that $(X,\equiv)$ is not a partition matroid. For the sake of contradiction, assume that $M:=(X,\equiv)$ is a partition matroid. We will use the term \textit{block} for a block of the corresponding partition. The \textit{capacity} of a block will be the size of the intersection of the block with a basis - it does not depend on the choice of basis by definition of partition matroid. 

Since $M$ has no loops, there is no block with capacity zero. Moreover, $M$ has no coloops as there is no element which belongs to every basis, and hence every block has at least two elements since every block has capacity at least one. 

Suppose there is a block with capacity 1, and let $x, y$ be distinct elements of this block.  Choose $z$ so that $z$ is not congruent to $x$ or $y$ mod 3.  Then $\{x, y, z\}$ is a basis, which contradicts the capacity assumption. So all blocks have capacity at least 2.  Since the capacities sum to the rank of the matroid $M$ (which is 3), this means there is a single block of size $3$.  That is the matroid is uniform.  But this also leads to a contradiction because $\{0, 3, 6\}$ is not a basis. 
\end{myeg}

\begin{rmk}
Proposition \ref{proposition: difference pro} can be used to derive bounds on the size of the equivalence classes.  To derive a simple statement along these lines, let $\{x_1,\ldots,x_k\}$ be an equivalence class.  Then the first and third parts of the proposition show that $(-x_1),\ldots,(-x_k)$ have distinct equivalence classes, none of which is the equivalence class we started with.  Moreover, all of these equivalence classes have size $k$ by Remark \ref{remark: equivalence class of inverse}.  Since there are at least $k + 1$ equivalence classes of size $k$, we have $k(k + 1) \leq p$.
\end{rmk}

We can subsume most of the above properties in a simpler statement by adjoining $0$ to the equivalence class.  Doing so uncovers a connection with the theory of Golomb rulers.

\begin{mydef}$S\subseteq \mathbb{Z}_p$ is said to be a \emph{modular Golomb ruler} if it contains $0$ and if for any two pairs of distinct residue classes $x\neq y$ and $z\neq w$ with $x - y = z - w$ we have $x = z$ and $y = w$.
\end{mydef}

\begin{cor}\label{cor: distinct difference property of equivalence class}
Let $p$ be an odd prime.  Let $\sim$ be a nontrivial equivalence relation on $\mathbb{Z}_p^\times$ such that $x \sim y$ but $x\neq y$ implies $(y - x) \sim (-x)$.  Let $S$ be the union of some equivalence class and $\{0\}$.  Let $x, y$ and $z, w$ be two pairs of distinct elements of $S$.  Then if $x - y = z - w$, we have $x = z$ and $y = w$.  In other words, $S$ is a modular Golomb ruler.
\end{cor}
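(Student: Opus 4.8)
The plan is to deduce this directly from Proposition~\ref{proposition: difference pro}, writing $S = C \cup \{0\}$ where $C$ is the chosen equivalence class; note that $0 \notin C$ since $C \subseteq \mathbb{Z}_p^\times$. Given two pairs of distinct elements $(x,y)$ and $(z,w)$ of $S$ with $x - y = z - w$, I would split into cases according to how many of the four entries equal $0$. Because $x \neq y$ and $z \neq w$, each pair contains at most one zero, so there are at most two zeros overall.

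If there is no zero, then $x,y,z,w \in C$ and the conclusion $x = z$, $y = w$ is exactly Proposition~\ref{proposition: difference pro}(4); this is the only case producing a genuine coincidence, and the remaining cases will all be ruled out. If exactly one entry is a subtracted term equal to zero, namely $y = 0$ or $w = 0$, then the equation forces a difference of two distinct elements of $C$ to equal a single element of $C$ (for instance $y = 0$ gives $x = z - w$ with $x \in C$), which is impossible by Proposition~\ref{proposition: difference pro}(2). If instead the single zero is $x = 0$ or $z = 0$, I would invoke the defining relation of $\sim$: taking $x = 0$, the equation $-y = z - w$ rewrites as $w - z = y$, and since $z \sim w$ with $z \neq w$ the relation gives $(w - z) \sim (-z)$, hence $y \sim (-z)$; as $y \in C$ this places $-z$ in $C = [z]$, i.e.\ $z \sim (-z)$, contradicting Proposition~\ref{proposition: difference pro}(1). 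The case $z = 0$ is symmetric, using the pair $x \sim y$. Finally, when two zeros occur (necessarily one in each pair), the equation either yields $y = w$ with $x = z = 0$, which is the desired conclusion, or yields $z = -y$ (resp.\ $x = -w$) with both sides in $C$, again contradicting Proposition~\ref{proposition: difference pro}(1).

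I expect the mixed cases with exactly one zero to be the main obstacle, since there the only available leverage is the defining relation $x \sim y \Rightarrow (y - x) \sim (-x)$, and one must track signs carefully to convert the additive equation into an equivalence of the forbidden form $c \sim (-c)$. The purely nonzero case is handed directly to Proposition~\ref{proposition: difference pro}(4) and the two-zero cases are immediate, so essentially all the content beyond Proposition~\ref{proposition: difference pro} lies in correctly reducing the one-zero cases to parts (1) and (2).
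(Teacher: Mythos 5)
Your proof is correct and takes essentially the same route as the paper's: both deduce the statement from Proposition~\ref{proposition: difference pro} by a case analysis on which of $x,y,z,w$ vanish, using part (4) in the zero-free case and deriving a forbidden equivalence $c \sim (-c)$ (part (1)) via the defining relation in the mixed cases, the paper merely compressing the enumeration by a symmetry reduction to the single case $x=0$. One trivial omission: your two-zero enumeration skips the case $y = w = 0$, which gives $x = z$ directly and is symmetric to the $x = z = 0$ case you do list.
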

\begin{proof}
If none of $x,y,z,w$ are zero, then this is just (4) of Proposition \ref{proposition: difference pro}. If $x=z=0$ or $y=w=0$, then this is vacuous. 

Next, when some of elements are zero, from symmetry, it is enough to consider the case that $x=0$. Then we have $y=w-z$. If $z=0$, then there is nothing to prove. Suppose that $z \neq 0$. If $w=0$, then we have $x=w=0$ and $y$ and $z$ are nonzero. But, then $x-y=-y=z=z-w$, in particular
\[
y \sim z \sim (-y),
\]
contradicting (1) of Proposition \ref{proposition: difference pro}. So, $w \neq 0$. But, then we have
\[
y = w-z \sim (-z),
\]
and hence we obtain the same contradiction. Therefore, if $x=0$, then $z=0$, and hence $x=z$ and $y=w$.
\end{proof}

As a generalization of the theory of Golomb rulers, we make the following definition.  Note that in the setting of the above corollary, the single set $S$ forms a distinct difference system.
\begin{mydef}\label{definition: difference system}
Let $A$ be an abelian group and $S_1, \ldots S_k\subseteq A$ be subsets.  We say $S_1,\ldots,S_k$ form a \emph{distinct difference system} in $A$ if the following hold.
\begin{itemize}
    \item 
    For any $i, j$ and any $x,y\in S_i$ and $w, z\in S_j$ with $x\neq y$ and $z\neq w$, the equation $x - y = z - w$ implies $x = z$ and $y = w$.
    \item 
    Each $S_i$ contains $0$ and at least one other element.
    \item $S_i \cap S_j = \{0\}$ for any $i\neq j$.
\end{itemize}
\end{mydef}


\begin{lem}\label{lemma: translates of distinct difference system}Let $A$ be an abelian group and $S_1,\ldots,S_k$ be a distinct difference system in $A$.  For any $i=1,\ldots, k$ and any $z\in S_i$ define
\begin{equation}
T_{i, z} = \{ x - z \mid x \in S_i, x\neq z \}.
\end{equation}
If $T_{i, z} \cap T_{j, w} \neq \emptyset$ then $i = j$ and $z = w$.
\end{lem}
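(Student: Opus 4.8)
The plan is to unwind the hypothesis $T_{i,z}\cap T_{j,w}\neq\emptyset$ into a single equation of differences and feed it directly into the defining difference property of the system. First I would pick an element $t$ lying in both $T_{i,z}$ and $T_{j,w}$. By the definition of these translate sets, this produces some $x\in S_i$ with $x\neq z$ and some $y\in S_j$ with $y\neq w$ such that $t = x - z = y - w$. Rearranged, this is an equality $x - z = y - w$ in which the left-hand side is a difference of two distinct elements of $S_i$ and the right-hand side is a difference of two distinct elements of $S_j$.

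Next I would apply the first bullet of Definition \ref{definition: difference system}, with the two (possibly equal) sets in the statement played by $S_i$ and $S_j$: taking the four elements to be $x, z \in S_i$ (with $x\neq z$) and $y, w\in S_j$ (with $y\neq w$), the relation $x - z = y - w$ forces $x = y$ and $z = w$. In particular $z = w$, which is already half of the desired conclusion. Note that this step uses the difference property in its full generality, i.e.\ across two different indices $i$ and $j$, not merely within a single $S_i$.

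The remaining step is to deduce $i = j$, and this is where the only genuine subtlety lies, since the difference property by itself is blind to the indices. Here I would argue by contradiction: if $i\neq j$, then $S_i\cap S_j = \{0\}$ by the third bullet of the definition. Since $z = w$ lies in both $S_i$ and $S_j$, this forces $z = w = 0$; likewise $x = y$ lies in $S_i\cap S_j$, so $x = y = 0$. But then $x = 0 = z$, contradicting $x\neq z$. Hence $i = j$, and combined with $z = w$ this completes the proof. I expect this last step to be the main (albeit modest) obstacle: disjointness alone is insufficient, and one must combine it with the non-degeneracy condition $x\neq z$ that is built into the very definition of $T_{i,z}$.
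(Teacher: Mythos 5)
Your proof is correct and follows essentially the same route as the paper's: pick a common element of $T_{i,z}\cap T_{j,w}$, apply the distinct difference property to the equation $x-z=y-w$ to get $x=y$ and $z=w$, and then rule out $i\neq j$ by noting that $S_i\cap S_j=\{0\}$ would force $x=z=0$, contradicting $x\neq z$. No discrepancies to report.
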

\begin{proof}
Suppose that $a\in T_{i, z} \cap T_{j, w}$.  Then, there is some $x\in S_i$ and $y\in S_j$ such that
\begin{equation}
a = x - z = y - w
\end{equation}
which implies $x = y$ and $z = w$.  If $i = j$, the result holds.  Otherwise $x, z \in S_i \cap S_j = \{0\}$.  But this contradicts the definition of $T_{i, z}$, which requires $x\neq z$.
\end{proof}

\begin{pro}\label{proposition: distinct difference system matroid}Let $A$ be a finite abelian group with at least $3$ elements and $S_1,\ldots,S_k$ be a distinct difference system in $A$.  Define a basis to be a subset of $A$ consisting of $3$ elements which is not a translate of a subset of any of the $S_i$.  This yields a simple rank $3$ $A$-invariant matroid structure on $A$.
\end{pro}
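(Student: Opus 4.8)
The plan is to realize the claimed matroid through the classification in Proposition~\ref{proposition: simple rank 3 classification}, applied with the subgroup $H = \{0\}$, so that $G/H = A$, the distinguished coset $\bar{1}$ becomes $0$, and the second condition of that proposition is automatically vacuous. Since $A$ has at least three elements, $[A : H] = |A| \ge 3$ and the hypotheses there are in force. It therefore suffices to produce a nontrivial equivalence relation $\sim$ on $A - \{0\}$ satisfying the first condition (written additively: $a \sim b$ and $a \neq b$ imply $(-a) \sim (b - a)$), and then to check that the bases it produces are exactly the $3$-element subsets that are not translates of subsets of some $S_i$. I would build $\sim$ from the translate sets $T_{i,z} = \{x - z \mid x \in S_i,\ x \neq z\}$ of Lemma~\ref{lemma: translates of distinct difference system}: declare $a \sim b$ iff $a = b$ or there is a pair $(i,z)$ with $z \in S_i$ and $a, b \in T_{i,z}$.

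Reflexivity and symmetry are immediate. The essential input for transitivity and well-definedness is Lemma~\ref{lemma: translates of distinct difference system}, which says the $T_{i,z}$ are pairwise disjoint; hence every element of $A - \{0\}$ lies in at most one $T_{i,z}$, each nonempty $T_{i,z}$ is a full equivalence class, and the remaining elements form singleton classes. Transitivity follows because if $a, b \in T_{i,z}$ and $b, c \in T_{j,w}$ share the element $b$, disjointness forces $(i,z) = (j,w)$. For nontriviality, fix any $i$ and two distinct $z, z' \in S_i$ (possible as $|S_i| \ge 2$): the classes $T_{i,z}$ and $T_{i,z'}$ are nonempty ($|T_{i,z}| = |S_i| - 1 \ge 1$) and disjoint, so there are at least two equivalence classes.

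Next I would verify the first condition. If $a \sim b$ with $a \neq b$, write $a = x - z$ and $b = y - z$ with $x, y \in S_i$ and $x, y \neq z$; since $a \neq b$ we get $x \neq y$. Then $-a = z - x$ and $b - a = y - x$, and both lie in $T_{i,x}$ because $z, y \in S_i$ are both different from $x$. Hence $(-a) \sim (b - a)$, and both are in $A - \{0\}$ as needed. Invoking Proposition~\ref{proposition: simple rank 3 classification} now yields a simple rank $3$ $A$-invariant matroid on $A$ whose bases are precisely the triples $\{a, b, c\}$ of distinct elements with $(b - a) \not\sim (c - a)$.

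It remains to match this with the stated basis description, i.e.\ to show that distinct $a,b,c$ satisfy $(b - a) \sim (c - a)$ iff $\{a,b,c\}$ is a translate of a $3$-element subset of some $S_i$. If $b - a, c - a \in T_{i,z}$, writing $b - a = x - z$ and $c - a = y - z$ and setting $t := a - z$ gives $\{a,b,c\} = t + \{z, x, y\}$ with $z, x, y \in S_i$ distinct; conversely, if $\{a,b,c\} = t + \{s_1, s_2, s_3\}$ with the $s_j \in S_i$ distinct, then taking $a = t + s_1$ places both $b - a = s_2 - s_1$ and $c - a = s_3 - s_1$ in $T_{i, s_1}$. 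Since a matroid is determined by its bases, this identifies the constructed matroid with the one in the statement. I expect the main obstacle to be precisely this last bookkeeping together with the verification of the first condition: both hinge on pinning down which $T_{i,z}$ a given difference belongs to and on using Lemma~\ref{lemma: translates of distinct difference system} to rule out overlaps. One should also note that the reordering-invariance of the basis condition, established inside the proof of Proposition~\ref{proposition: simple rank 3 classification}, is what permits the convenient labeling $a = t + s_1$ above.
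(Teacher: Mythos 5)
Your proposal is correct and follows essentially the same route as the paper's proof: both define $a \sim b$ via membership in a common translate set $T_{i,z}$, use the disjointness from Lemma~\ref{lemma: translates of distinct difference system} for transitivity, verify condition (1) of Proposition~\ref{proposition: simple rank 3 classification} with $H = \{0\}$ by the same difference computation, and finish by matching the resulting bases with translates of $3$-element subsets of the $S_i$. The only (harmless) divergence is nontriviality, which you obtain directly by exhibiting the two disjoint nonempty classes $T_{i,z}$ and $T_{i,z'}$, whereas the paper argues by contradiction that a trivial relation would force some $S_i = A$.
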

\begin{proof}




Let $T_{i, z}$ be defined as in Lemma \ref{lemma: translates of distinct difference system}, and observe that these sets are all disjoint.

Define $x\sim y$ to mean that either $x = y$ or there is some $i$ and some $z\in S_i$ such that $x, y\in T_{i, z}$.  The only non-obvious step in showing that $\sim$ is an equivalence relation is to establish transitivity in the case where the 3 elements are distinct.  Let $a, b, c$ be distinct with $a\sim b\sim c$.  Then there is some $i, j$, and some $z\in S_i$, and $w\in S_j$ such that $a, b\in T_{i, z}$ and $b, c\in T_{j, w}$.  Since $b\in T_{i, z}\cap T_{j, w}$ we get $i = j$ and $z = w$ so $a, c\in T_{i, z}$.  This implies $a\sim c$.

Now, we show that the above equivalence relation satisfies the two conditions in Proposition \ref{proposition: simple rank 3 classification} with $H=\{0\}$ and is nontrivial. Since $H=\{0\}$, the second condition is vacuous. So, we only have to show that the equivalence relation satisfies the first condition and is nontrivial.

Suppose that $x\sim y$ but $x\neq y$.  There is some $z$ such that $x, y\in T_{i, z}$.  In particular $z, x + z, y + z\in S_i$.  This implies that $T_{i, x + z}$ contains $(-x) = z - (x + z)$ and $(y - x) = (y + z) - (x + z)$.  Thus $(-x) \sim (y - x)$.

 Next, we claim that $\sim$ is nontrivial.  Suppose otherwise and let $x\neq 0$.  Then for any $y\not\in \{0, x\}$ we have $x\sim y$ so there is some $T_{i, z}$ with $x, y\in T_{i, z}$.  Since there is only one $T_{i, z}$ containing $x$, it contains all nonzero elements. It follows that $S_i = A$.  But this contradicts the distinct difference property since $2x - x = x - 0$ but $x \neq 0$.

Now $\sim$ corresponds to a simple rank $3$ $A$-invariant matroid structure on $A$.  Consider an arbitrary set $\{w, x + w, y + w\}$ of three distinct elements.  This is dependent if and only if $x\sim y$.  Since $x\neq y$, this is equivalent to the existence of $i$ and $z\in S_i$ such that $x, y\in T_{i, z}$.  Since $x, y\neq 0$, this is equivalent to the existence of $i$ and $z\in S_i$ such that $x + z, y + z \in S_i$.  This in turn is equivalent to the existence of $z\in A$ and $i$ such that $z, x + z, y + z \in S_i$.  Since $\{z, x + z, y + z\}$ is an arbitrary translate of $\{w, x + w, y + w\}$, this is equivalent to $\{w, x + w, y + w\}$ being a translate of a $3$-element subset of some $S_i$. This shows our description of bases is equivalence to the one obtained from the equivalence relation via Proposition~\ref{proposition: simple rank 3 classification}.
\end{proof}

\begin{myeg}\label{example: matroid cyclic}
Let $n$ be an odd natural number.  Let $k,u\in \mathbb{Z}_n^\times$. 
Define 
\[
S = \{0, u, ku \}.
\]
Then the single subset $S\subseteq \mathbb{Z}_n$ forms a distinct difference system as long as
\[
0, \quad u, \quad ku, \quad -u, \quad -ku, \quad (k - 1)u,\quad  (1 - k)u
\]
are all distinct.  Since $u$ is a unit, this requirement is equivalent to $0, 1, k, (-k), (k - 1), (1 - k)$ being all distinct.  This holds if $k\neq 0, \pm 1, 2, \frac{n + 1}{2}$.  
In this case we obtain a matroid structure in which a 3 element subset is a basis unless it is a translate of $\{0, u, ku\}$.  This produces the matroid of \cite[Theorem 4.9 (1)]{marcus2024tropical}.  The case of even $n$ is similar, which is \cite[Theorem 4.9 (2)]{marcus2024tropical}.
\end{myeg}

By working directly with matroids rather than equivalence classes, we can extend our construction to ranks greater than $3$.  

\begin{mythm}\label{theorem: abelian case translation}
Let $k\geq 3$.  Let $A$ be a finite abelian group with at least $k$ elements.  Fix a distinct difference system $S_1,\ldots S_l$.  Define a subset of $A$ to be a basis if it has $k$ elements and is not a translate of a subset of some $S_i$.  Then this yields a simple rank $k$ $A$-invariant matroid structure on $A$.
\end{mythm}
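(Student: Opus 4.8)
The plan is to verify the basis axioms directly for the family $\mathcal{B}$ of $k$-element subsets of $A$ that are not translates of a subset of any $S_i$, and then to read off $A$-invariance and simplicity. Since every member of $\mathcal{B}$ has exactly $k$ elements, equicardinality is automatic, so the only matroid axioms requiring work are that $\mathcal{B}$ is nonempty and that it satisfies the basis exchange axiom. The engine driving everything is a rigidity statement that I will isolate first: for any subset $C\subseteq A$ with $|C|\geq 2$ there is \emph{at most one} pair $(i,t)$ with $C - t\subseteq S_i$. I will call this the \emph{unique fit} property, and it is the one place where the distinct difference hypothesis genuinely enters.

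To prove unique fit, suppose $C - t\subseteq S_i$ and $C - t'\subseteq S_j$ and pick distinct $c,c'\in C$. Then $c-t,c'-t\in S_i$ and $c-t',c'-t'\in S_j$, and the equality $(c-t)-(c'-t)=c-c'=(c-t')-(c'-t')$ feeds directly into the distinct difference axiom of Definition~\ref{definition: difference system}, forcing $c-t=c-t'$, hence $t=t'$; then $C-t\subseteq S_i\cap S_j$, and since $|C|\geq 2$ the condition $S_i\cap S_j=\{0\}$ for $i\neq j$ rules out $i\neq j$. With unique fit in hand the exchange axiom becomes almost formal. Given $B_1,B_2\in\mathcal{B}$ and $x\in B_1\setminus B_2$, set $B_1'=B_1\setminus\{x\}$, a set of size $k-1\geq 2$. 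If exchange failed for every $y\in B_2\setminus B_1$, then each $B_1'\cup\{y\}$ (a $k$-set, since $y\notin B_1'$) would fit into some $S_{i_y}$ via a translate $t_y$; but $B_1'\subseteq B_1'\cup\{y\}$ fits via the same $(i_y,t_y)$, so unique fit applied to $B_1'$ forces a single pair $(i_0,t_0)$ independent of $y$, with $y - t_0\in S_{i_0}$ for all such $y$. Since $x\notin B_2$, every element of $B_2\cap B_1$ already lies in $B_1'$ and hence also satisfies $y-t_0\in S_{i_0}$; therefore $B_2 - t_0\subseteq S_{i_0}$, contradicting $B_2\in\mathcal{B}$.

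For nonemptiness and simplicity I will reduce to the rank $3$ case already established in Proposition~\ref{proposition: distinct difference system matroid}. That proposition produces a simple rank $3$ matroid whose bases are exactly the $3$-subsets not fitting into any $S_i$, so such a $3$-subset exists and, by simplicity, every $1$- or $2$-element subset is contained in one. Because fitting into a translate of $S_i$ passes to subsets, any $k$-element set containing a non-fitting $3$-subset is itself non-fitting; as $|A|\geq k$, I may enlarge any non-fitting $3$-subset to a $k$-subset, which then lies in $\mathcal{B}$. This gives $\mathcal{B}\neq\emptyset$ and, applied to $3$-subsets containing a prescribed $1$- or $2$-element set, shows every such small set extends to a basis, so the matroid is loopless and has no parallel pairs, hence is simple of rank $k$. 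Finally, $A$-invariance is immediate, since translating a $k$-set does not change whether it fits into a translate of some $S_i$, so $\mathcal{B}$ is stable under the translation action.

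The main obstacle, and the only place any real content enters, is the unique fit property; once it is established the remaining steps are bookkeeping. I should be careful that unique fit genuinely requires $|C|\geq 2$ (a single point fits into many translates), which is precisely why the hypothesis $k\geq 3$, guaranteeing $|B_1'|=k-1\geq 2$ in the exchange step, cannot be dropped.
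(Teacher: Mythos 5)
Your proof is correct, but it takes a genuinely different route from the paper's. Both arguments turn on the same rigidity computation: two common elements of sets fitting into translates of $S_i$ and $S_j$ force, via the distinct difference axiom, equality of the translates and then (via $S_i\cap S_j=\{0\}$) equality of the indices. You isolate this once and for all as the ``unique fit'' lemma, whereas the paper inlines the computation twice. From there the scaffolding diverges: you verify the basis axioms (nonemptiness plus exchange), with unique fit making the exchange step nearly formal, and you obtain nonemptiness and simplicity by invoking the rank-$3$ case (Proposition \ref{proposition: distinct difference system matroid}) together with the observation that non-fitting passes to supersets, so any non-fitting $3$-set enlarges to a non-fitting $k$-set. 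The paper instead verifies the circuit elimination axiom, which forces it to first prove the stronger statement that \emph{every} subset of size less than $k$ is independent (so every circuit has at least $k$ elements); that step needs a case analysis ($|A|=k$ versus $|A|\geq k+1$) and the observation that no $S_i$ can equal $A$. Your version buys modularity and brevity --- no case analysis, no control of circuit sizes --- at the cost of depending on Proposition \ref{proposition: distinct difference system matroid} (a legitimate dependence, since that proposition precedes the theorem and its proof does not use it); the paper's version is self-contained and establishes along the way the stronger fact that all circuits have size at least $k$, which your argument does not give, though the theorem as stated does not require it.
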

\begin{proof}The $A$-invariance is trivial.  To show simplicity, we will prove the stronger claim that every subset of size less than $k$ is independent.  For this, it suffices to consider a subset $I\subseteq A$ of $k - 1$ elements.  For the sake of contradiction, we suppose that $I \cup \{u\}$ is not a basis for any $u\in A \backslash I$.  We observe that $S_i \neq A$ for any $i$, because $S_i$ cannot contain a subset of the form $\{0, s, s + s\}$ by 
the first condition in Definition \ref{definition: difference system} applied to the case $x=0$, $y=s$, $z=s$, and $w=s+s$ with $i=j$.

We first consider the case when $A$ has exactly $k$ elements, so there is only one choice of $u$ and $I \cup \{u\} = A$.  Since $I \cup \{u\}$ is dependent, it is a translate of a subset of some $S_i$, which implies that $S_i = A$ for some $i$, giving us a contradiction.

Now suppose that $A$ has at least $(k+1)$ elements. Fix $u\not\in I$.  Since $I \cup \{u\}$ is not a basis, there is some $i$ and $a\in A$ such that
\begin{equation}
I\subseteq I \cup \{u\} \subseteq \{ s + a \mid s\in S_i \}.
\end{equation}
For any $v\not\in I$ with $v\neq u$, there is some $j$ and $b\in A$ such that
\begin{equation}
I\subseteq I \cup \{v\} \subseteq \{ t + b \mid t\in S_j \}.
\end{equation}
Let $x, y\in I$ be distinct (which is possible since $k - 1\geq 2$).  Then $(x - a), (y - a)\in S_i$ and $(x - b), (y - b)\in S_j$.  Observe that
\begin{equation}
(x - a) - (y - a) = (x - b) - (y - b).
\end{equation}
Since $(x - a)\neq (y - a)$, by the definition of distinct difference system, we must have $(x - a) = (x - b)$ and $(y - a) = (y - b)$.  So $a = b$.  But then $S_i$ and $S_j$ both contain $(x - a)$ and $(y - a)$, so we must also have $i = j$.  Now
\begin{equation}
v\in I \cup \{v\}\subseteq \{ t + b \mid t\in S_j \} = \{ s + a \mid s\in S_i \}
\end{equation}
Since this holds for all $v\not\in I \cup \{u\}$ and since $I \cup \{u\}$ is also contained in the right side of the above equation, the right side is $A$.  Thus, we have
\[
A=\{s+a \mid s \in S_i\}.
\]
It follows that $S_i$ has a subset of the form $\{0,t,t+t\}$,  which is a contradiction.  Thus every set with less than $k$ elements is contained in a basis; in particular such a set cannot be a circuit.

Finally, we must prove this yields a matroid, which we do via the circuit elimination axiom.  Let $C_1, C_2\subseteq A$ be circuits containing a common element $e\in C_1\cap C_2$.  By the above, each contains at least $k$ elements.  We must show that $(C_1 \cup C_2) \backslash \{e\}$ is dependent or $C_1 = C_2$.  Note that this is trivial if either circuit has more than $k$ elements, since then $(C_1 \cup C_2) \backslash \{e\}$ also has more than $k$ elements (assuming $C_1\neq C_2$), and hence cannot be contained in a basis.

So we suppose both circuits have $k$ elements.  Note that
\begin{equation}
| (C_1 \cup C_2) \backslash \{e\} | = |C_1| + |C_2| - |C_1\cap C_2| - 1 = 2k - 1 - |C_1 \cap C_2|.
\end{equation}
If $(C_1 \cup C_2) \backslash \{e\} $ has more than $k$ elements, then it is dependent, while if it has fewer than $k$ elements then $|C_1 \cap C_2| > (k - 1)$ so $C_1 = C_2$.  So we may assume it has $k$ elements. In particular, we may assume that $|C_1 \cap C_2|=k-1 \geq 2$.

Let $x,y\in C_1\cap C_2$ be distinct.  Since $C_1, C_2$ have size $k$ and are not bases, there exists $i, j$ and $a,b\in A$ such that
\begin{equation}
C_1 \subseteq \{ s + a \mid s\in S_i \}
\end{equation}
and
\begin{equation}
C_2 \subseteq \{ t + b \mid t\in S_j \}.
\end{equation}
In particular $(x - a), (y - a)\in S_i$ and $(x - b), (y - b) \in S_j$.  As before, we may use
\begin{equation}
(x - a) - (y - a) = (x - b) - (y - b)
\end{equation}
to obtain $a = b$.  Then $(x - a), (y - a)$ are both in $S_i \cap S_j$ so $i = j$.  Now, we have
\begin{equation}
(C_1 \cup C_2) \backslash \{e\} \subseteq C_1 \cup C_2 \subseteq \{ s + a \mid s\in S_i \}.
\end{equation}
Since $(C_1 \cup C_2) \backslash \{e\}$ has $k$ elements, and since the above equation says it is not a basis, it must be dependent, which establishes the theorem.
\end{proof}

Returning to the case of simple rank $3$ $A$-invariant matroid structures on $A$, we will next show that if $A=\mathbb{Z}_p$, then all such matroid structures arise via our construction.

\begin{lem}\label{lemma: distinct difference pair dichotomy}
Let $p$ be an odd prime.  Let $\sim$ be an equivalence relation on $\mathbb{Z}_p^\times$ such that $x\sim y$ and $x\neq y$ together imply $(-x) \sim (y - x)$.  Let $S_1$ and $S_2$ each be obtained by adjoining $0$ to some equivalence class.  Then either $S_1, S_2$ form a distinct difference system or there is some $a\in S_1$ and $b\in S_2$ such that 
\begin{equation}
\{x - a\mid x\in S_1\} = \{y - b\mid y\in S_2\}.
\end{equation}
\end{lem}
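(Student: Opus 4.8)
The plan is to reduce the whole statement to a single structural fact---that translating $S_i$ by any one of its own elements again yields an equivalence class with $0$ adjoined---and then observe that a shared difference of $S_1$ and $S_2$ forces two such translated classes to coincide.

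First I would dispose of the degenerate case. If $S_1 = S_2$ (the two equivalence classes coincide), then $0 \in S_1 \cap S_2$ and taking $a = b = 0$ gives $\{x - a \mid x \in S_1\} = S_1 = S_2 = \{y - b \mid y \in S_2\}$, so the second alternative holds. Hence I may assume $S_1 \neq S_2$; then the underlying classes are disjoint, so $S_1 \cap S_2 = \{0\}$, and in particular $\sim$ has at least two classes. Now, among the three requirements of Definition~\ref{definition: difference system}, all but the cross instance ($i\neq j$) of the difference condition hold automatically: each $S_i$ contains $0$ and a further element since its class is nonempty; $S_1 \cap S_2 = \{0\}$; and each $S_i$ satisfies the within-set condition ($i=j$) by Corollary~\ref{cor: distinct difference property of equivalence class}. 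Consequently, if $S_1, S_2$ fail to be a distinct difference system, the failure must be the cross condition, so there exist distinct $x, y \in S_1$ and distinct $z, w \in S_2$ with $x - y = z - w =: d$, where necessarily $d \neq 0$.

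The key lemma I would isolate is this: for $S = C \cup \{0\}$ with $C$ an equivalence class and for any $g \in S$, the set $(S - g)\setminus\{0\}$ is again an equivalence class of $\sim$---namely $C$ itself when $g = 0$, and $[-g]$ when $g \in C$. The case $g = 0$ is immediate, and the case $g \in C$ is exactly the description of the class of $-g$ recorded in Remark~\ref{remark: equivalence class of inverse} (specialized to $\mathbb{Z}_p$, whose additive analog of property (1) is our hypothesis): one has $[-g] = \{-g\} \cup \{h - g \mid h \sim g,\ h \neq g\} = (S - g)\setminus\{0\}$. Granting this, the conclusion is short. From $d = x - y$ with $x \in S_1$ and $d\neq 0$ we get $d \in (S_1 - y)\setminus\{0\} =: E_1$, and likewise $d \in (S_2 - w)\setminus\{0\} =: E_2$; by the key lemma both $E_1$ and $E_2$ are equivalence classes. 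Since they share $d$, they coincide, whence $S_1 - y = E_1 \cup \{0\} = E_2 \cup \{0\} = S_2 - w$. Taking $a = y \in S_1$ and $b = w \in S_2$ then produces the translate identity, which is the second alternative.

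I expect the main obstacle to be the key lemma itself: verifying that a translate of $C \cup \{0\}$ by one of its elements is \emph{equal} to another class with $0$ adjoined, rather than merely contained in one. The containment $(S - g)\setminus\{0\} \subseteq [-g]$ is a one-line consequence of the defining implication of $\sim$, but equality requires the reverse inclusion, which Remark~\ref{remark: equivalence class of inverse} supplies; if I preferred to argue from scratch I would match cardinalities, using part (3) of Proposition~\ref{proposition: difference pro} to show $|[-g]| = |C|$. A secondary point to keep clean is that the shift amounts $y$ and $w$ may equal $0$, so the key lemma must be stated for all $g \in S$; the argument handles $g = 0$ uniformly because $(S_i - 0)\setminus\{0\}$ is just the original class.
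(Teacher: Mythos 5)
Your proposal is correct, and at its core it follows the same route as the paper's proof: both reduce, via Corollary~\ref{cor: distinct difference property of equivalence class}, any failure of the distinct difference property to a cross-difference between $S_1$ and $S_2$, and both invoke Remark~\ref{remark: equivalence class of inverse} to recognize $S_i$ translated by one of its own elements as an equivalence class with $0$ adjoined, so that the shared difference $d$ forces the two translates to coincide. Two organizational choices in your write-up are genuine improvements over the paper's version. First, your key lemma handles the shift $g=0$ uniformly, whereas the paper splits into the cases $y,w\neq 0$, then $w=0$, then $y=0$, and repeats essentially the same computation in each. Second, you dispose of $S_1=S_2$ at the outset; the paper's proof opens with ``not a distinct difference system implies there exist $x,y,z,w$ violating the difference condition,'' which is not justified when $S_1=S_2$, since in that case the pair fails only the requirement $S_1\cap S_2=\{0\}$ while the difference condition holds by Corollary~\ref{cor: distinct difference property of equivalence class}. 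Your explicit degenerate case closes that small gap, and it also guarantees that $\sim$ has at least two classes in the remaining case, which is exactly the nontriviality hypothesis needed to apply that corollary (the lemma itself does not assume nontriviality, so this point deserves the care you gave it).
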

\begin{proof}
Suppose $S_1, S_2$ is not a distinct difference system.  Then there exists some $i, j\in \{1,2\}$ and some $x, y\in S_i$ and $z,w\in S_j$ such that $x\neq y$, $z\neq w$, $x\neq z$, $y\neq w$ and $x - y = z - w$.  By Corollary \ref{cor: distinct difference property of equivalence class}, we must have $i\neq j$, and without loss of generality, we may assume $i = 1$ and $j=2$.

From Remark \ref{remark: equivalence class of inverse}, if $y\neq 0$ then
\begin{equation}
[-y] \cup \{ 0\} = \{ s - y \mid s \in S_1 \}    
\end{equation}
and if $w\neq 0$ then
\begin{equation}
[-w] \cup \{ 0\} = \{ t - w \mid t \in S_2 \}.
\end{equation}
Also observe that if $y\neq 0$, then $(x-y)\sim (-y)$ (this is reflexivity if $x=0$ and follows from $x\sim y$ and $x\neq y$ otherwise) and if $w\neq 0$ then $(z - w)\sim (-w)$.  In particular, if $y, w\neq 0$ then
\begin{equation}
(-y) \sim (x - y) = (z - w) \sim (-w).
\end{equation}
Hence, we have
\begin{equation}
    \{ s - y \mid s \in S_1 \}  = \{ t - w \mid t \in S_2 \},
\end{equation}
which proves the theorem in this case.

If $w = 0$ and $y\neq 0$, then 
\begin{equation}
(-y) \sim (x - y) = z
\end{equation}
and adjoining $0$ to the equivalence class of both sides yields
\begin{equation}
S_2 = [z] \cup \{0 \} = [-y] \cup \{ 0\} = \{ a - y \mid a \in S_1 \}
\end{equation}
The case $y = 0$ and $w\neq 0$ is handled by the same argument.

\end{proof}

\begin{mythm}\label{theorem: Classification in the prime cyclic case}
Let $p$ be an odd prime.  Every simple rank $3$ $\mathbb{Z}_p$-invariant matroid structure on $\mathbb{Z}_p$ can be produced by the construction from Proposition \ref{proposition: distinct difference system matroid}.
\end{mythm}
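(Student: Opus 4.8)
The plan is to reverse the construction of Proposition~\ref{proposition: distinct difference system matroid}. By Proposition~\ref{proposition: simple rank 3 classification} applied with $H=\{0\}$ (so that the second condition there is vacuous), a simple rank $3$ $\mathbb{Z}_p$-invariant matroid structure on $\mathbb{Z}_p$ is exactly the same datum as a nontrivial equivalence relation $\sim$ on $\mathbb{Z}_p^\times$ with the property that $x\sim y$ and $x\neq y$ imply $(-x)\sim(y-x)$. This is precisely the hypothesis of Corollary~\ref{cor: distinct difference property of equivalence class} and of Lemma~\ref{lemma: distinct difference pair dichotomy}, so both are available. Starting from such a $\sim$, I would form, for each equivalence class, the set obtained by adjoining $0$, producing a family $S_1,\ldots,S_N$. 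By Corollary~\ref{cor: distinct difference property of equivalence class} each $S_i$ is a modular Golomb ruler; by construction each contains $0$ and at least one further element; and since distinct classes are disjoint, $S_i\cap S_j=\{0\}$ for $i\neq j$.

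The family $S_1,\ldots,S_N$ need not itself be a distinct difference system, since the cross-difference condition between two classes can fail. This is exactly where Lemma~\ref{lemma: distinct difference pair dichotomy} enters: for any pair $S_i,S_j$, either they already satisfy the distinct difference condition or $S_i$ and $S_j$ are translates of one another. I would therefore define a relation $\approx$ on the index set by declaring $i\approx j$ when $S_i$ and $S_j$ are translates; this is an equivalence relation (translations compose and invert), and I would pick one representative from each $\approx$-class, obtaining a subfamily $S_{i_1},\ldots,S_{i_m}$. Since no two of the chosen sets are translates, Lemma~\ref{lemma: distinct difference pair dichotomy} forces every pair among them to satisfy the cross-difference condition; together with the Golomb property inside each set and the disjointness noted above, this shows $S_{i_1},\ldots,S_{i_m}$ is a distinct difference system in the sense of Definition~\ref{definition: difference system}.

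It then remains to check that the matroid produced from $S_{i_1},\ldots,S_{i_m}$ via Proposition~\ref{proposition: distinct difference system matroid} recovers the original one. Both are simple rank $3$ matroids on the ground set $\mathbb{Z}_p$, so it suffices to match their dependent $3$-element sets. Working additively, a $3$-set $\{a,b,c\}$ is dependent in the original matroid iff $(b-a)\sim(c-a)$, and I would show this is equivalent to $\{a,b,c\}$ being a translate of a $3$-element subset of some $S_i$: the forward direction uses $\{0,b-a,c-a\}$, which lies in the set $S_i$ attached to the common class of $b-a$ and $c-a$, while the converse uses the defining property of $\sim$ (if $s_1,s_2,s_3$ lie in one class then $s_2-s_1\sim(-s_1)\sim s_3-s_1$), exactly as in the proof of Proposition~\ref{proposition: distinct difference system matroid}. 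Because every $S_i$ is a translate of one of the chosen representatives $S_{i_j}$, being a translate of a $3$-subset of some $S_i$ coincides with being a translate of a $3$-subset of some $S_{i_j}$; hence the two matroids have identical dependent $3$-sets, and therefore coincide.

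The main obstacle, and the technical heart of the argument, is the passage from the raw family $S_1,\ldots,S_N$ to a genuine distinct difference system: one must eliminate the translate-collision alternative of Lemma~\ref{lemma: distinct difference pair dichotomy} by discarding redundant classes, while simultaneously verifying that this discarding leaves the induced matroid unchanged. Primality of $p$ is essential throughout, since it is what guarantees (through Proposition~\ref{proposition: difference pro}, via $x\not\sim(-x)$) both the Golomb property and the dichotomy on which the reduction rests; for composite moduli the collision analysis genuinely fails.
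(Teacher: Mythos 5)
Your proposal is correct and follows essentially the same route as the paper: both reverse the construction by assembling a distinct difference system out of the $0$-augmented equivalence classes of the relation $\sim$ supplied by Proposition~\ref{proposition: simple rank 3 classification}, with Lemma~\ref{lemma: distinct difference pair dichotomy} doing the key work of disposing of translate collisions, and with Corollary~\ref{cor: distinct difference property of equivalence class} supplying the within-class Golomb property. The only differences are organizational: the paper fixes a \emph{maximal} such system and shows its translates $T_{i,z}$ exhaust and coincide with the equivalence classes, whereas you pick representatives of the translation-equivalence classes and finish by matching dependent $3$-sets directly---two interchangeable implementations of the same argument.
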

\begin{proof}
Fix a simple rank $3$ $\mathbb{Z}_p$-invariant matroid structure on $\mathbb{Z}_p$ and let $\sim$ be the corresponding equivalence relation. Among all distinct difference systems $S_1$,\ldots, $S_k$ with the additional property that each $S_i - 0$ is an equivalence class, fix one that it maximal with respect to inclusion.\footnote{Corollary \ref{cor: distinct difference property of equivalence class} ensures that there exists at least one such distinct difference system, and since everything is finite, such a maximal distinct difference system exists.} 

For each $i$ and each $z\in S_i$, let $T_{i,z}$ be as in Lemma \ref{lemma: translates of distinct difference system}.  Note that these subsets are disjoint.  We claim that they form a partition of $\mathbb{Z}_p^\times$.  Let $x\in \mathbb{Z}_p^\times$ and $S = [x] \cup \{ 0 \}$.  We must show that there is some $i, z$ such that $x\in T_{i, z}$.

By maximality $S_1,\ldots,S_k,S$ is not a distinct difference system.  Since each axiom of a distinct difference system involves only two subsets at a time, there is some $i$ such that $S_i, S$ is not a distinct difference system.  By Lemma \ref{lemma: distinct difference pair dichotomy}, there is some $a\in S$ and $b\in S_i$ such that
\begin{equation}
\{ s - a \mid s\in S \} = \{ t - b \mid t\in S_i \}.
\end{equation}

We shall show $b - a\in S_i$.  If $a = 0$ or if $a = b$, this is clear.  If $b = 0$, then $b-a = -a$ belongs to the left side of the equation and the right is $S_i$.  Otherwise, each side contains $-a$ and $-b$ (since $0\in S\cap S_i$).  By Remark \ref{remark: equivalence class of inverse}, each side is the union of $0$ and an equivalence class.  Thus $(-a) \sim (-b)$, which implies $b\sim (b - a)$.  Since $S_i$ is the union of 0 and an equivalence class and $b\in S_i$, we get $b - a \in S_i$.

We add $a$ to both sides of the above equation to obtain
\begin{equation}\label{eq: subt}
S = \{ t - (b - a) \mid t \in S_i \}.
\end{equation}
Since $b-a\in S_i$, removing $0$ from both sets in \eqref{eq: subt} yields $[x] = T_{i, b - a}$, establishing the claim.

Next we claim the $T_{i, z}$ are precisely the equivalence classes of $\sim$.  Since they cover $\mathbb{Z}_p$, it suffices to show each $T_{i, z}$ is an equivalence class.  If $z = 0$, $T_{i, 0} = S_i - 0$ is an equivalence class by assumption.  Otherwise $T_{i, z} $ is the equivalence class of $-z$ by Remark \ref{remark: equivalence class of inverse}.

Now let $\equiv$ be the equivalence relation appearing in the proof of Proposition
\ref{proposition: distinct difference system matroid}.  Then $x\equiv y$ means $x = y$ or there is some $i$ and some $z\in S_i$ such that $x,y\in T_{i,z}$.  But we have seen there is some $i$ and $z$ such that $x\in T_{i,z}$.  So $x\equiv y$ is equivalent to the existence of $i, z$ such that $x,y\in T_{i,z}$.  That is the sets $T_{i, z}$ are the equivalence classes of $\equiv$.  Thus $\equiv$ agrees with $\sim$, which implies the construction from Proposition
\ref{proposition: distinct difference system matroid} yields the matroid we started with.


\end{proof}

\subsubsection{Classification - non-simple rank 3 case}

Next we relate tropical subrepresentations of $\mathbb{B}[G]$ to actions on simple matroids whose underlying $G$-set is cyclic.  The key to this is the same equivalence relation we used in the rank $2$ case.  We will show the quotient is a simple matroid.

\begin{lem}\label{lemma: simple matroid quotient}
Let $M$ be a loopless matroid on $E$ whose rank is at least $2$.  Write $x\sim y$ if $x=y$ or $\{x, y\}$ is dependent.  Then
\begin{enumerate}
    \item 
If $x \sim y$ and $I\subseteq E$ does not contain $x$ and $I + x$ is independent then $I + y$ is independent.
    \item 
If $I = \{x_1,\ldots, x_n\}$ and $I' = \{y_1,\ldots, y_n\}$ are such that $x_i \sim y_i$ for all $i$, then $I$ is independent if and only if $I'$ is independent.
    \item 
Let $\bar{E} = E/ \sim$, the set of equivalence classes. Consider the following set:
\[
\mathcal{I}:=\{ \bar{I} \subseteq \bar{E} \mid \textrm{ for some independent set $I \subseteq E$ of $M$}\},
\]
where $\bar{I}$ is the image of $I$ in $\bar{E}$. Then $(\bar{E},\mathcal{I})$ defines a simple matroid $\bar{M}$ on $\bar{E}$ with the independent sets $\mathcal{I}$. Furthermore, $\text{rank}(\bar{M})=\text{rank}(M)$.
\end{enumerate}
\end{lem}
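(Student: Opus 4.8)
The plan is to prove the three parts in the order stated, since each feeds the next: Part (1) is a single circuit-elimination move, Part (2) iterates it, and Part (3) uses Part (2) to verify the matroid axioms on $\bar E$. For Part (1) I would argue by contradiction. The case $x=y$ is trivial, so assume $x\neq y$ and that $I+y$ is dependent. Since $I+x$ is independent, so is $I$, and therefore every circuit contained in $I+y$ must contain $y$; fix such a circuit $C$. Because $x\sim y$ with $x\neq y$, the pair $\{x,y\}$ is itself a circuit, and it is distinct from $C$ (otherwise $x\in I+y$ would force $x\in I$, contradicting $x\notin I$). Applying circuit elimination to $C$ and $\{x,y\}$ at the common element $y$ yields a circuit contained in $(C-y)\cup\{x\}\subseteq I+x$, contradicting the independence of $I+x$.

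For Part (2) I would induct on the number $d$ of indices with $x_i\neq y_i$, and since the hypothesis $x_i\sim y_i$ is symmetric and the conclusion is a biconditional, it suffices to show that $I$ independent implies $I'$ independent. The base case $d=0$ is immediate. For the inductive step, choose $k$ with $x_k\neq y_k$ and set $J=I-x_k$. \emph{The main technical obstacle here is bookkeeping about set sizes}: I must ensure that replacing $x_k$ by $y_k$ again produces a genuine $n$-element set. This is where independence of $I$ is used: a collision $y_k=x_i$ for some $i\neq k$ would give $x_i=y_k\sim x_k$, so $\{x_i,x_k\}$ would be a dependent pair inside the independent set $I$, which is impossible. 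Hence $I''=J+y_k$ is an $n$-element set, Part (1) applies with this $J$ to show $I''$ is independent, and $I''$ and $I'$ are $n$-element sets elementwise related by $\sim$ with only $d-1$ mismatches, so the inductive hypothesis gives that $I'$ is independent.

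For Part (3) the first thing I would record, using Part (2), is the clean characterization that $\bar I\in\mathcal I$ if and only if \emph{every} (equivalently, \emph{some}) system of representatives is independent in $M$. I would also note that since distinct elements of a common class are parallel, the projection $E\to\bar E$ is injective on every independent set, so any independent lift $I$ of $\bar I$ satisfies $|I|=|\bar I|$. With this, the matroid axioms follow directly: $\emptyset\in\mathcal I$ and heredity hold by restricting an independent lift to the classes appearing in a subset; for the exchange axiom, lift $\bar I,\bar J\in\mathcal I$ to independent $I,J$ with $|I|<|J|$, apply the exchange axiom of $M$ to obtain $e\in J\setminus I$ with $I+e$ independent, and observe that $\bar e\notin\bar I$ automatically (otherwise $e$ would be parallel to an element of $I$, contradicting independence of $I+e$), so that $\bar I+\bar e\in\mathcal I$ with $\bar e\in\bar J\setminus\bar I$.

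Finally I would check simplicity and the rank equality, both of which are short. Simplicity: each singleton class is independent because $M$ is loopless, so $\bar M$ has no loops; and any two distinct classes admit non-parallel representatives, giving an independent pair, so $\bar M$ has no two-element circuits. Thus all circuits of $\bar M$ have at least three elements, i.e.\ $\bar M$ is simple. The rank equality is immediate from $|\bar I|=|I|$: the maximal size of a member of $\mathcal I$ equals the maximal size of an independent set of $M$, so $\operatorname{rank}(\bar M)=\operatorname{rank}(M)$. The only genuinely delicate step in the whole argument is the collision analysis in Part (2); everything else is a direct application of circuit elimination, the exchange axiom, and the injectivity of $E\to\bar E$ on independent sets.
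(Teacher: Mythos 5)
Your proposal is correct and takes essentially the same approach as the paper: part (1) by circuit elimination against the two-element circuit $\{x,y\}$, part (2) by iterated replacement with the same collision check, and part (3) by lifting independent sets to $E$ and transferring the exchange axiom of $M$. The only cosmetic differences are that you explicitly verify the two circuits in (1) are distinct, and that in (3) you use arbitrary independent lifts together with the observation that the exchanged element's class lies outside $\bar{I}$, where the paper instead fixes a section $s:\bar{E}\rightarrow E$ of the quotient map.
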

\begin{proof}
Note that from Lemma \ref{lemma: equivalence relation and rank 2}, we know that $\sim$ is an equivalence relation and moreover since the rank of $M$ is assumed to be at least $2$, the equivalence relation $\sim$ is nontrivial from the transitivity of $\sim$.  

(1) and (2): For the sake of contradiction, suppose that $I + y$ is dependent, meaning it contains some circuit $C$.  Since $I+x$ is independent, we have $x \neq y$. Moreover, since $I$ is independent, as it is a subset of an independent set $I+x$, we have $y\in C$.  Because $x\sim y$, $\{x, y\}$ is also a circuit.  By circuit elimination, $C + x - y$ is dependent. But, as $C - y \subseteq I$, $I + x$ is dependent, giving us a contradiction. The second claim follows by applying the first claim repeatedly in order to replace each $x_i$ by $y_i$. Note that if $y_i=x_j$ for some $j \neq i$, then $x_i\sim x_j$ so $\{x_i, x_j\}$ is dependent, giving us a contradiction.


(3): Let $\bar{I}, \bar{J}$ be independent sets of $\bar{M}$.  It is clear any subset of $\bar{I}$ is also independent, so we must prove the exchange axiom.  Suppose $|\bar{I}| < |\bar{J}|$.  Fix a section $s: \bar{E}\rightarrow E$ of the quotient map.  $\bar{I} = \{\bar{x_1},\ldots,\bar{x_n}\}$ is the image of some independent set $\{x_1,\ldots,x_n\}\subseteq E$.  Since $x_i \sim s(\bar{x_i})$, we know that $s(\bar{I}) = \{ s(\bar{x_1}),\ldots,s(\bar{x_n}) \}$ is also independent by (2), and its image is $I$.

Since $s(\bar{J})$ contains $|\bar{J}| > |\bar{I}| = |s(\bar{I})|$ elements, there exists some $s(u)\in s(\bar{J}) - s(\bar{I}) = s(\bar{J} - \bar{I})$ such that $s(\bar{I} + u) = s(\bar{I}) + s(u)$ is independent.  Since its image under the quotient map is $\bar{I} + u$, we obtain that $\bar{I} + u$ is independent, establishing the exchange axiom.

It is clear that the rank of $\bar{M}$ is the rank of $M$ since no two elements of a basis are equivalent.

Let $\{\bar{x}, \bar{y}\}\subseteq\bar{M}$ be a two element subset.  Since $\bar{x} \neq \bar{y}$, $x\not\sim y$ so $\{x, y\}$ is independent.  Thus $\{\bar{x}, \bar{y}\}$ is independent, so $\bar{M}$ is simple.
\end{proof}

We now show that loopless matroids are essentially sets with a quotient that carries the structure of a simple matroid.

\begin{pro}\label{prop: correspondence between loopless and simple matroids}
Let $E$ be a finite set.  Let $k\geq 2$.  
\begin{enumerate}
    \item 
There is a one-to-one correspondence between loopless rank $k$ matroid structures on $E$ and pairs consisting of a nontrivial equivalence relation $\sim$ on $E$ and a simple rank $k$ matroid structures on $E/\sim$.
    \item 
Let $G$ be a group and fix a $G$-action on $E$.  There is a one-to-one correspondence between $G$-invariant loopless rank $k$ matroid structures on $E$ and pairs consisting of a nontrivial $G$-invariant equivalence relation $\sim$ on $E$ and a simple rank $k$ $G$-invariant matroid structures on $E/\sim$.
\end{enumerate}
\end{pro}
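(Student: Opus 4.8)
The plan is to construct a map in each direction between loopless rank $k$ matroids on $E$ and the described pairs, and show they are mutually inverse. For the forward direction, given a loopless rank $k$ matroid $M$ on $E$, I would define $\sim$ exactly as in Lemma~\ref{lemma: simple matroid quotient}, namely $x\sim y$ iff $x=y$ or $\{x,y\}$ is dependent. By Lemma~\ref{lemma: equivalence relation and rank 2} this is an equivalence relation, and it is nontrivial because $k\geq 2$ forces a two-element independent set. Then Lemma~\ref{lemma: simple matroid quotient}(3) produces a simple rank $k$ matroid $\bar{M}$ on $E/\sim$, so $M\mapsto (\sim,\bar{M})$ is the forward map.

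For the reverse direction, given a nontrivial equivalence relation $\sim$ on $E$ and a simple rank $k$ matroid $N$ on $E/\sim$, I would define a subset $I\subseteq E$ to be independent precisely when its image $\bar{I}$ in $E/\sim$ is independent in $N$ \emph{and} $I$ contains at most one element of each equivalence class (equivalently, $|\bar{I}| = |I|$ and $\bar{I}\in\mathcal{I}(N)$). I would then verify the matroid axioms for this $\mathcal{I}$: the hereditary property is immediate, and for the exchange axiom I would lift to $E/\sim$, apply the exchange axiom for $N$ on the images, and use that a representative of the new equivalence class can be chosen outside $I$ (since $\sim$-classes that already meet $I$ correspond to elements of $\bar{I}$). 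The resulting matroid is loopless because singletons map to independent singletons in the simple matroid $N$, and it has rank $k$ since bases correspond bijectively to bases of $N$.

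The core of the argument is then checking that the two constructions are mutually inverse. Starting from $M$, forming $(\sim,\bar{M})$, and rebuilding a matroid $M'$ by the reverse construction, I must show $M' = M$; the equality of independent sets is exactly the content of Lemma~\ref{lemma: simple matroid quotient}(2), which says that independence of a transversal set depends only on the tuple of equivalence classes. Conversely, starting from a pair $(\sim, N)$, building $M$, and recomputing the equivalence relation and quotient, I must check that the recovered relation is $\sim$ (two distinct elements $x,y$ lie in the same class iff $\{x,y\}$ is dependent in $M$ iff $\bar{x}=\bar{y}$) and that the recovered quotient matroid is $N$. I expect the main obstacle to be this last verification: confirming that the simple matroid reconstructed from $M$ via Lemma~\ref{lemma: simple matroid quotient}(3) really is the original $N$, which requires matching the independent sets of $N$ with images of independent sets of $M$ in both directions and invoking looplessness and simplicity at the right moments.

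For part (2), I would add $G$-equivariance throughout. The key point is that all constructions are canonical, so $G$-invariance is automatically preserved. Concretely, if $M$ is $G$-invariant then $x\sim y$ implies $gx\sim gy$ since $g$ sends dependent pairs to dependent pairs, so $\sim$ is $G$-invariant and the induced $G$-action on $E/\sim$ makes $\bar{M}$ a $G$-invariant simple matroid. In the reverse direction, if $\sim$ is $G$-invariant and $N$ is $G$-invariant, then the independence condition defining $M$ is phrased entirely in terms of images in $E/\sim$ and class-intersection sizes, both of which are preserved by $G$; hence $M$ is $G$-invariant. Since the bijection of part (1) is defined by $G$-equivariant operations, it restricts to a bijection on the $G$-invariant objects, giving part (2).
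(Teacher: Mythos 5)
Your proposal is correct and follows essentially the same route as the paper: the same forward map via Lemma~\ref{lemma: simple matroid quotient}, the same reverse construction (image independent in $N$ together with at most one element per fiber), the same appeal to Lemma~\ref{lemma: simple matroid quotient}(2) for recovering $M$ from the pair $(\sim,\bar{M})$, and the same equivariance checks for part (2). The only difference is presentational --- you organize the bijection as two mutually inverse maps where the paper argues injectivity and surjectivity of the forward map --- and if anything you are more explicit than the paper in flagging the (easy, but needed) verification that the relation and quotient matroid reconstructed from the reverse construction agree with the original pair $(\sim, N)$.
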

\begin{proof}
(1): Given a loopless rank $k$ matroid structure on $E$, we obtain a nontrivial equivalence relation $\sim$ and a simple rank $k$ matroid structure on $M/\sim$ by (3) of Lemma \ref{lemma: simple matroid quotient}.  Let $\pi: E \rightarrow E/\sim$ be the quotient map.  Observe that if $I\subseteq E$ is an independent subset of $M$, then $\pi(I)$ is independent subset of $\bar{M}$ by definition, where $\bar{M}$ is as in Lemma \ref{lemma: simple matroid quotient}. Also by definition of $\sim$, an independent subset $I$ contains at most one element of each fiber of $\pi$.  

Next, we check the injectivity of the correspondence. For this, it is enough to show that $M$ is uniquely determined by the above equivalence relation $\sim$ and the matroid $\bar{M}$ on $E/\sim$, considered above. Suppose $I\subseteq E$ contains at most one element of each fiber and is such that $\pi(I)$ is independent.  Then we may choose an independent subset $I'\subseteq E$ such that $\pi(I') = \pi(I)$.  And as above, $I'$ contains at most one element of each fiber.  So by (2) of Lemma \ref{lemma: simple matroid quotient}, the independence of $I'$ implies that of $I$.  
Thus $I\subseteq E$ is an independent subset of $M$ if and only if $\pi(I)\subseteq E/ \sim$ is an independent subset of $\bar{M}$.  This implies the matroid structure of $M$ is determined by $\sim$ and the matroid $\bar{M}$.  That is the correspondence is injective.

Now suppose we are given a nontrivial equivalence relation $\sim$ and a simple rank $k$ matroid structure $N$ on $E/\sim$.  We define $\mathcal{I}$ be the set of subsets $I \subseteq E$ such that $\pi(I)$ is an independent subset of $N$ and $I$ contains at most one element of each fiber of $\pi$. We claim that $(E,\mathcal{I})$ is a matroid with the independent subsets $\mathcal{I}$. In fact, clearly a subset of an independent set is independent.

For the exchange axiom, let $I, J \in \mathcal{I}$ such that $|I| < |J|$.  Then $|\pi(I)| < |\pi(J)|$ since each set contains only one element of each fiber.  Thus there exists some element $\bar{x}\in \pi(J) - \pi(I)$ such that $\pi(I) + \bar{x}$ is independent.  Pick some $x\in J$ such that $\pi(x)=\bar{x}$.  Clearly $x\not\in I$ and $I + x$ contains at most one element of each fiber.  Moreover we observed $\pi(I + x)$ is independent so $I + x$ is independent.  Thus $(E,\mathcal{I})$ is a matroid, and clearly the rank is $k$. Let's denote this matroid by $M$. Next observe that if $x\in E$ is a loop of $M$ then so is $\pi(x)\in E/\sim$, which is impossible.  Thus $M$ is loopless.  This shows that the correspondence is surjective. 

(2): We only have to check that the correspondence in $(1)$ is $G$-equivariant. Let $M$ be a $G$-invariant loopless rank $k$ matroid on $E$. Then, the induced equivalence relation $\sim$ is $x\sim y$ if $x=y$ or $\{x,y\}$ is dependent. Now, suppose that $x \sim y$. If $x=y$, then $gx=gy$, and hence $gx \sim gy$. If $\{x,y\}$ is dependent, then $\{gx,gy\}$ is dependent as well since otherwise $\{gx,gy\}\subseteq B$ for some basis $B$, and hence $\{x,y\}\subseteq g^{-1}B$, which is also a basis. The $G$-invariance of the matroid $\bar{M}$ on $E/\sim$ is clear from the definition of $\bar{M}$. 

Conversely, suppose that we have a nontrivial $G$-invariant equivalence relation $\sim$ on $E$ and a simple rank $k$ $G$-invariant matroid $\bar{M}$ on $E/\sim$. From (1), we know that $\bar{M}$ comes from a matroid $M$ on $E$ and $I\subseteq E$ is an independent subset of $M$ if and only if $\pi(I) \subseteq E/\sim$ is an independent subset of $\bar{M}$, where $\pi:E \to E/\sim$ is the projection map and $I$ contains at most one element of each fiber. 

Let $I=\{x_1,\dots,x_r\}$. First, we claim that $gx_i\neq gx_j$ for all $g \in G$ and $i \neq j$. Indeed, if $gx_i=gx_j$, then $gx_i \sim gx_j$, and since $\sim$ is $G$-invariant, we have $x_i \sim x_j$ which means that $\{x_i,x_j\}$ is dependent, giving us a contradiction. So, we have $gI=\{gx_1,\dots,gx_r\}$, and from the construction of $\bar{M}$ together with the assumption that $\bar{M}$ is $G$-invariant, $\pi(gI)$ is an independent subset of $\bar{M}$. It follows that $gI$ is an independent subset of $M$ as desired. 
\end{proof}

\begin{cor}\label{corollary: thmc}
Let $G$ be a finite group.  There is a one-to-one correspondence between rank $3$ $G$-invariant matroid structures on $G$ and pairs consisting of a proper subgroup $H\subseteq G$ and a nontrivial equivalence relation $\sim$ on $G/H - \{\bar{1}\}$ which satisfies the properties listed in Proposition \ref{proposition: simple rank 3 classification}. Furthermore, $H$ is the trivial group if and only if the matroid is simple.
\end{cor}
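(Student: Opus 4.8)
The plan is to assemble the corollary from three results already established: Proposition~\ref{prop: correspondence between loopless and simple matroids}(2), Remark~\ref{rmk: implying MP24}, and Proposition~\ref{proposition: simple rank 3 classification}. Throughout I take $E = G$ with $G$ acting on itself by left multiplication.

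First I would observe that every rank $3$ $G$-invariant matroid $M$ on $G$ is automatically loopless: since $G$ acts transitively on itself, a single loop would force every element to be a loop, contradicting $\mathrm{rank}(M) = 3$. Hence I may apply Proposition~\ref{prop: correspondence between loopless and simple matroids}(2) with $k = 3$, obtaining a bijection between rank $3$ $G$-invariant matroids on $G$ and pairs $(\sim_1, N)$, where $\sim_1$ is a nontrivial $G$-invariant equivalence relation on $G$ and $N$ is a simple rank $3$ $G$-invariant matroid on $G/{\sim_1}$. Here $\sim_1$ is the relation of Lemma~\ref{lemma: simple matroid quotient}, namely $x \sim_1 y$ iff $x = y$ or $\{x, y\}$ is dependent, and $N = \bar{M}$ is its simple quotient.

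Next I would identify the two remaining factors. By Remark~\ref{rmk: implying MP24}, the nontrivial $G$-invariant equivalence relations $\sim_1$ on $G$ are precisely the congruence kernels of the quotient maps $G \to G/H$ for $H$ a proper subgroup; under this identification the equivalence classes are the left cosets, so $G/{\sim_1} = G/H$ as a $G$-set, with the class of $1$ being $\bar{1} = H$. Because $N$ has rank $3$, its ground set $G/H$ must have at least three elements, so $[G:H] \geq 3$ is automatic and Proposition~\ref{proposition: simple rank 3 classification} applies: simple rank $3$ $G$-invariant matroids on $G/H$ correspond bijectively to nontrivial equivalence relations $\sim$ on $G/H - \{\bar{1}\}$ satisfying conditions (1) and (2) there. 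Composing the three bijections yields the claimed correspondence between rank $3$ $G$-invariant matroids on $G$ and pairs $(H, \sim)$.

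Finally, for the statement that $H$ is trivial if and only if the matroid is simple, I would unwind the first bijection. Since the classes of $\sim_1$ are the cosets of $H$, we have $H = \{1\}$ exactly when $\sim_1$ is the discrete relation, which by the description of $\sim_1$ happens exactly when $M$ has no two-element circuits, i.e. when $M$ is simple; in that case the quotient map $G \to G/{\sim_1}$ is the identity and $M = N$. I expect the only point demanding care to be the compatibility of the $G$-set identifications $G/{\sim_1} \cong G/H$ and the placement of the base point $\bar{1}$, so that the hypotheses of Proposition~\ref{proposition: simple rank 3 classification} (which are phrased relative to $\bar{1}$) line up with the output of the first two steps; everything else is a formal composition of the established bijections.
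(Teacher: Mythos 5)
Your proposal is correct and follows essentially the same route as the paper's proof: the looplessness observation via transitivity, then Proposition~\ref{prop: correspondence between loopless and simple matroids}(2), the identification of nontrivial $G$-invariant equivalence relations on $G$ with proper subgroups (Remark~\ref{rmk: implying MP24}), and finally Proposition~\ref{proposition: simple rank 3 classification}. Your explicit unwinding of the ``$H$ trivial iff $M$ simple'' claim and the check that $[G:H]\geq 3$ are details the paper leaves implicit, but they do not change the argument.
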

\begin{proof}
First we observe that every rank $3$ $G$-invariant matroid with ground $G$-set equal to $G$ has no loops.  This is because if $g$ is a loop, then by transitivity of the $G$-action on $G$, every element is a loop.  This contradicts the assumption that the matroid has rank $3$. 

Next observe that $G$-invariant nontrivial equivalence relations on $G$ are in one-to-one correspondence with proper subgroups of $G$.  So by Proposition \ref{prop: correspondence between loopless and simple matroids} rank $3$ $G$-invariant matroid structures on $G$ correspond to pairs consisting of a proper subgroup $H$ and a simple rank $3$ $G$-invariant matroid structure on $G / H$.  The result now follows from Proposition \ref{proposition: simple rank 3 classification}.
\end{proof}

\bibliography{Vectorbundle}\bibliographystyle{alpha}

\end{document}